\newcommand{\ver}{{\rm ver}}
\newcommand{\vo}{{\rm vol}}
\newtheorem{theorem}{Theorem}
\newtheorem*{corollary*}{Corollary}
\begin{document}
\title{Five-dimensional Perfect Simplices
}
\author{Mikhail Nevskii\footnote{Department of Mathematics,
              P.G.~Demidov Yaroslavl State University, Sovetskaya str., 14, Yaroslavl, 150003, Russia 
              orcid.org/0000-0002-6392-7618 
              mnevsk55@yandex.ru}        \and
        Alexey Ukhalov \footnote{ Department of Mathematics, P.G.~Demidov Yaroslavl State University, Sovetskaya str., 14, Yaroslavl, 150003, Russia 
              orcid.org/0000-0001-6551-5118 
              alex-uhalov@yandex.ru}
              }
\date{September 18, 2017}
\maketitle

\begin{abstract}
Let $Q_n=[0,1]^n$ be the unit cube in ${\mathbb R}^n$, $n \in {\mathbb N}$.
 For a nondegenerate simplex $S\subset {\mathbb R}^n$,
consider the value $\xi(S)=\min \{\sigma>0: Q_n\subset \sigma S\}$. 
Here $\sigma S$ is a homothetic image of $S$ with homothety center at the center of gravity of $S$ 
and coefficient of homothety $\sigma$. 
Let us introduce the value  $\xi_n=\min \{\xi(S): S\subset Q_n\}$. 
We call $S$ a perfect simplex if $S\subset Q_n$ and $Q_n$ is inscribed into the simplex $\xi_n S$. 
It is known that such simplices exist for $n=1$ and $n=3$. 
The exact values of $\xi_n$  are known for $n=2$ and in the case when there exist an  Hadamard matrix of order $n+1$; 
in the latter situation  $\xi_n=n$. 
In this paper we show that $\xi_5=5$ and $\xi_9=9$. We also describe infinite families of simplices  $S\subset Q_n$ such that $\xi(S)=\xi_n$ for $n=5,7,9$. 
The main result of the paper is the existence of perfect simplices in ${\mathbb R}^5$.

Keywords: simplex, cube,  homothety, axial diameter, Hadamard matrix

\end{abstract}

\section{Introduction}
\label{nev_uhl_p_intro}

Let us introduce the basic definitions.
We always assume that $n\in{\mathbb N}$. 
Element $x\in{\mathbb R}^n$ we present in component form as $x=(x_1,\ldots,x_n)$. 
Denote $Q_n:=[0,1]^n$, $Q_n':=[-1,1]^n$.

For a convex body $C\subset{\mathbb R}^n$,  by $\sigma C$ we mean  the result
of homothety of $C$ with center of homothety at the center of gravity of $C$ and coefficient $\sigma$. 

If $C$ is a convex polyhedron, then $\ver(C)$ 
means the set of vertices of $C$. 
We say that $n$-dimensional simplex $S$ 
is circumscribed around a convex body $C$ if $C\subset S$ and each $(n-1)$-dimensional face of $S$ contains a point of $C$. 
Convex polyhedron is inscribed into $C$ if each vertex of this polyhedron belongs to the boundary of $C$.

For a convex body  $C\subset{\mathbb R}^n$,
by $d_i(C)$ we denote  the length of a longest segment in $C$ parallel to the $i$th coordinate axis.
The  value $d_i(C)$ we call {\it the $i$th axial diameter of  $C$}. 
The notion of axial diameter was introduced by P.\,Scott  in \cite{nu_bib_11}, 
\cite{nu_bib_12}.

For nondegenerate simplex $S$ and convex body $C$ in ${\mathbb R}^n$,
we consider the value
$$\xi(C;S):=\min \{\sigma\geq 1: C\subset \sigma S\}.$$
Denote $\xi(S):=\xi(Q_n;S).$
The equality $\xi(C;S)=1$ is equivalent to the inclusion $C\subset S$.
For convex bodies $C_1, C_2\subset {\mathbb R}^n$, we define $\alpha(C_1;C_2)$  as the minimal $\sigma>0$ such that
$C_1$ belongs to a translate of $\sigma C_2$.     
Denote  $\alpha(C):=\alpha(Q_n;C)$. 
In this paper we compute the  values  $\xi(S)$ and $\alpha(S)$ for simplices $S\subset Q_n.$ 
We also consider  the value
$$
\xi_n:=\min \{ \xi(S): \, S \mbox{ --- $n$-dimensional simplex,} \, S\subset Q_n, \, \vo(S)\ne 0\}.
$$

Let us introduce basic Lagrange polynomials of $n$-dime\-nsio\-nal simplex. 
Let $\Pi_1({\mathbb R}^n)$ be the set of polynomials in $n$ real variables of degree $\leq 1$.
Consider a nondegenerate simplex $S$ in ${\mathbb R}^n$. Denote vertices of 
$S$ by $x^{(j)}=\left(x_1^{(j)},\ldots,x_n^{(j)}\right),$ 
$1\leq j\leq n+1,$ and build the matrix  
$$
{\bf A} :=
\left( \begin{array}{cccc}
x_1^{(1)}&\ldots&x_n^{(1)}&1\\
x_1^{(2)}&\ldots&x_n^{(2)}&1\\
\vdots&\vdots&\vdots&\vdots\\
x_1^{(n+1)}&\ldots&x_n^{(n+1)}&1\\
\end{array}
\right).
$$
Let $\Delta:=\det({\bf A})$, then $\vo(S)=\frac{|\Delta|}{n!}$.
Denote by $\Delta_j(x)$ the determinant  obtained from 
 $\Delta$ by changing the  $j$th row with  the row
$(x_1,\ldots,x_n, 1).$ 
Polynomials
$\lambda_j(x):=
\frac{\Delta_j(x)}{\Delta}$ 
from $\Pi_1({\mathbb R}^n)$ 
have the property
$\lambda_j\left(x^{(k)}\right)$ $=$ 
$\delta_j^k$, where $\delta_j^k$ is the  Kronecker delta. 
Coefficients of $\lambda_j$ form the $j$th column of ${\bf A}^{-1}$.
In the following we write
${\bf A}^{-1}$ $=(l_{ij})$, i.\,e.,~$\lambda_j(x)=
l_{1j}x_1+\ldots+
l_{nj}x_n+l_{n+1,j}.$

Each polynomial  $p\in \Pi_1({\mathbb R}^n)$ 
can be represented in the form
$$p(x)=\sum_{j=1}^{n+1} p\left(x^{(j)}\right)\lambda_j(x).$$
We call  $\lambda_j$ 
{\it basic Lagrange polynomials related to $S$}.
By taking  $p(x)=x_1, \ldots, x_n, 1$, we obtain
\begin{equation} \label{nev_uhl_lagr_sums}
\sum_{j=1}^{n+1} \lambda_j(x) x^{(j)}=x, \quad
\sum_{j=1}^{n+1} \lambda_j(x)=1.
\end{equation}
Therefore, the numbers $\lambda_j(x)$  
are barycentric coordinates of $x\in{\mathbb R}^n$ with respect to 
$S$. Simplex $S$ can be determined by each of the systems of inequalities
$\lambda_j(x)\geq 0$ or $0\leq \lambda_j(x)\leq 1$.

It is proved in \cite{nu_bib_1} that for $i$th axial diameter of simplex $S$ holds
\begin{equation}\label{nev_uhl_d_i_formula}
\frac{1}{d_i(S)}=\frac{1}{2}\sum_{j=1}^{n+1} \left|l_{ij}\right|. 
\end{equation}
There exist exactly one line segment in $S$ with the length $d_i(S)$ parallel to
the $x_i$-axis. The center of this segment coincides with the point
\begin{equation}\label{nev_uhl_max_center}
y^{(i)}= \sum_{j=1}^{n+1} m_{ij} 
x^{(j)}, \quad 
m_{ij}:=
\frac{\left|l_{ij}\right|}
{\sum\limits_{k=1}^{n+1}\left|l_{ik}\right|}.
\end{equation}                                        
Each $(n-1)$-face of $S$ 
contains at least one of the endpoints of this segment.
These results were generalized to the maximum line segment in $S$ 
parallel to an arbitrary vector $v\ne 0$. 
In~\cite{nu_bib_4} were obtained the formulae for the length and endpoints 
of such a segment via coordinates of vertices of $S$ and coordinates of $v$. 

From equality (\ref{nev_uhl_d_i_formula}) and properties of $l_{ij}$ (see \cite{nu_bib_3}, Chapter\,1) it follows that the value
$d_i(S)^{-1}$ 
is equal to the sum of the positive elements of the $i$th row of ${\bf A}^{-1}$ and simultaniously
is equal to the sum of the absolute values of the negative elements of this row. 

Note the following formulae for introduced numerical characteristics.
Let  $S$ be a nondegenerate simplex and $C$ be a convex body in ${\mathbb R}^n.$
It was shown in \cite{nu_bib_3} (see \S\,1.3) that in the case $C\not\subset S$ we have
\begin{equation}\label{nev_uhl_xi_c_formula}
\xi(C;S)=(n+1)\max_{1\leq k\leq n+1}
\max_{x\in C}(-\lambda_k(x))+1. 
\end{equation}
The condition
\begin{equation}\label{nev_uhl_circum_conv_cond}
\max\limits_{x\in C} \left(-\lambda_1(x)\right)=
\ldots=
\max\limits_{x\in C} \left(-\lambda_{n+1}(x)\right)
\end{equation}
is equivalent to the fact that simplex 
$\xi(S)S$ is circumscribed around $C$.
When  $C$ is a cube in ${\mathbb R}^n$,  
equality (\ref{nev_uhl_xi_c_formula}) can be written in the form
\begin{equation}\label{nev_uhl_xi_s_cub_formula}
\xi(S)=(n+1)\max_{1\leq k\leq n+1}
\max_{x\in \ver(C)}(-\lambda_k(x))+1,
\end{equation}
and (\ref{nev_uhl_circum_conv_cond}) can be replaced by the condition
\begin{equation}\label{nev_uhl_circum_cub_cond}
\max\limits_{x\in \ver(C)} \left(-\lambda_1(x)\right)=
\ldots=
\max\limits_{x\in \ver(C)} \left(-\lambda_{n+1}(x)\right).
\end{equation}

For  $C=Q_n$ formula (\ref{nev_uhl_xi_s_cub_formula}) 
was proved in \cite{nu_bib_1}. The statement  that  
(\ref{nev_uhl_circum_cub_cond}) holds if and only if $\xi(S)S$ is circumscribed 
around $Q_n$ follows from the results of \cite{nu_bib_1} and \cite{nu_bib_10}.

It was proved in \cite{nu_bib_3} (see \S\,1.4) that, for arbitrary convex body $C$ and nondegenerate 
simplex $S$ in ${\mathbb R}^n$, holds
\begin{equation}\label{nev_uhl_alpha_conv_formula}
\alpha(C;S)=
\sum_{j=1}^{n+1} \max_{x\in C} (-\lambda_j(x))+1. 
\end{equation}
If $C=Q_n$, then (\ref{nev_uhl_alpha_conv_formula}) is equivalent to
\begin{equation}\label{nev_uhl_alpha_d_i_formula}
\alpha(S)
=\sum_{i=1}^n\frac{1}{d_i(S)}.
\end{equation}

Equality (\ref{nev_uhl_alpha_d_i_formula}) was  established in \cite{nu_bib_10}. 
There are several interesting corollaries of this result.
For instance, we present here the formula for $\alpha(S)$ via coefficients 
of  $\lambda_j:$
\begin{equation}\label{nev_uhl_alpha_lij_formula}
\alpha(S)=\frac{1}{2}\sum_{i=1}^n\sum_{j=1}^{n+1} |l_{ij}|. 
\end{equation}
\noindent From (\ref{nev_uhl_alpha_lij_formula}) and properties of $l_{ij}$, it follows that 
 $\alpha(S)$ is equal to the sum of the positive elements of
upper $n$ rows of ${\bf A}^{-1}$ and simultaneously is equal to the sum 
of the absolute values of the negative elements of these rows. 

It is obvious that for convex body $C$ and simplex $S$ holds $\xi(C;S)\geq\alpha(C;S)$. The equality takes place only when simplex $\xi(S)S$ is circumscribed around $C$.

If $S\subset Q_n$, then $d_i(S)\leq 1$. Applying (\ref{nev_uhl_alpha_d_i_formula}) 
for this case we have
\begin{equation}\label{nev_uhl_xi_geq_alpha_geq_n}
\xi(S)\geq \alpha(S)\geq n.
\end{equation}
Consequently,
$\xi_n\geq n.$ 
By 2009, the first author obtained that
$$\xi_1=1,\quad\xi_2=\frac{3\sqrt{5}}{5}+1=2.34\ldots,\quad\xi_3=3,\quad 
4\leq \xi_4\leq \frac{13}{3}=
4.33\ldots,$$

$$5\leq \xi_5\leq 5.5,\quad 6\leq \xi_6\leq 6.6,\quad \xi_7=7.$$
If  $n>2$, then
\begin{equation}\label{nev_uhl_xi_leq_n23n1}
\xi_n\leq\frac{n^2-3}{n-1}. 
\end{equation}
Hence, for any $n$, we have $n\leq \xi_n<n+1$. 
Inequality
(\ref{nev_uhl_xi_leq_n23n1}) was established by calculations for the simplex $S^*$ 
with vertices
$(0,1,\ldots,1)$, 
$(1,0,\ldots,1)$, 
$\ldots$, $(1,1,\ldots,0)$, 
$(0,0,\ldots,0)$ 
(see \cite{nu_bib_2}, \cite{nu_bib_3}, \S\,3.2).
If $n>2$, then
$\xi(S^*)=\frac{n^2-3}{n-1},$
which gives (\ref{nev_uhl_xi_leq_n23n1}). 
If  $n\geq 3$, then  $S^*$ has the following property
(see \cite{nu_bib_7}, Lemma~3.3): replacement of an arbitrary vertex of  $S^*$ by 
any point of $Q_n$ decreases the volume of the simplex. 
For $n=2,3,4$ (and only in these cases),
$S^*$ is a simplex of maximum volume in $Q_n.$ If $n\geq 2$, then
$d_i(S^*)=1$, 
therefore $\alpha(S^*)=n.$ 
If $n=3$, then $\alpha(S^*)=\xi(S^*)$; for $n>3$ we have
$\alpha(S^*)<\xi(S^*)$.
  
If  $n+1$ is an Hadamard number, i.e., there exist an Hadamard matrix of the order $n+1$, 
then  $\xi_n=n$ (see section \ref{nev_uhl_p_adamarcase}). 
In this and only this case there exist a regular simplex $S$ inscribed into  $Q_n$ such that  vertices of $S$ coincide with vertices of $Q_n$ (\cite{nu_bib_7}, Theorem~4.5).
For such a simplex, we have  $\xi(S)=n$. 
It follows from (\ref{nev_uhl_xi_geq_alpha_geq_n}) that $\alpha(S)=n$, and  (\ref{nev_uhl_alpha_d_i_formula}) gives  $d_i(S)=1$. 

Since 2011, the first author supposed that the equality  $\xi_n=n$ holds
only if  $n+1$ is an Hadamard number. In this paper we will demonstrate that it is not so.
The smallest $n$ such that  $n+1$ is not an Hadamard number, while  $\xi_n=n$,
is equal to $5$; the next number with such a property is $9$. 
Below we study simplices  $S$ such that $S\subset Q_n\subset nS$,
for odd $n$, $1\leq n\leq 9$. Since $\xi_2=2.34...>2$, there no exist such 
a simplex in the case $n=2$.
For even numbers $n\geq 4$, the problem of existence of simplices
with the condition  $S\subset Q_n\subset nS$ is unsolved. 
In  \cite{nu_bib_5} the authors proved that $\xi_4\leq \frac{19+5\sqrt{13}}{9}=4.1141\ldots$, and conjectured that  $\xi_4=\frac{19+5\sqrt{13}}{9}$.

A nondegenerate simplex $S$ in ${\mathbb R}^n$  we will call  {\it a perfect simplex
with respect to an $n$-dimensional cube $Q$} if $S\subset Q\subset \xi_n S$ and cube $Q$
is inscribed into simplex $\xi_n S$, i.\,e., the boundary of $\xi_n S$ contains all the vertices of
$Q$. If simplex is perfect with respect of the cube $Q_n$, then  we  shortly call such a simplex 
 {\it perfect}. 

By the moment of submitting the paper, only three values of $n$,
 such that perfect simplices  in ${\mathbb R}^n$ exist, are known to the authors.
These numbers are $1, 3 \text{ and } 5$. In all these cases we have $\xi_n=n$.
The case $n=1$ is trivial. The unique (up to similarity) three-dimensional 
perfect simplex  is described in section~\ref{nev_uhl_p_n13}. 
The main result of this paper is the existence of perfect simpices in ${\mathbb R}^5$, see section~\ref{nev_uhl_p_xi5is5}.
Moreother, in section~\ref{nevuhl_perf_fam5} we describe the whole  family of $5$-dimensional perfect simplices.

\section{Supporting Vertices of the Cube}
\label{nev_uhl_p_lem_vertices}

Let $S$ be an $n$-dimensional nondegenerate simplex with vertices $x^{(1)},$
$\ldots,$ $x^{(n+1)}$ and let $\lambda_1$, $\ldots$, $\lambda_{n+1}$ be basic Lagrange polynomials of $S$. 
We define the $j$th facet of $S$ as its $(n-1)$-dimensional face which
does not contain the vertex $x^{(j)}$. In other words, $j$th facet of  $S$ is an $(n-1)$-face cointained in hyperplane  $\lambda_j(x)=0$. 
By the $j$th facet of simplex $\sigma S$, we mean the facet parallel to the $j$th facet of $S$.

\begin{theorem}\label{nev_uhl_lem_vertices}
Let $S\subset Q_n$. The vertex $v$ of $Q_n$ belongs to the  $j$th facet of 
simplex $\xi(S)S$  if and only if
\begin{equation}\label{nev_uhl_cub_lambd}
-\lambda_j(v)=\max_{1\leq k\leq n+1, x\in \ver(Q_n)} (-\lambda_k(x)). 
\end{equation}
\end{theorem}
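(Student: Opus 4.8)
The plan is to express the basic Lagrange polynomials of the homothetic simplex $\sigma S$ through those of $S$, and then to read off facet membership as a single linear equation. Write $c=\frac{1}{n+1}\sum_{j=1}^{n+1}x^{(j)}$ for the center of gravity of $S$; the homothety defining $\sigma S$ sends $x\mapsto c+\sigma(x-c)$, so the vertices of $\sigma S$ are $y^{(j)}=c+\sigma(x^{(j)}-c)$. Denoting by $\mu_j$ the basic Lagrange polynomials of $\sigma S$, I would first show that the barycentric coordinates of a point $x$ with respect to $\sigma S$ coincide with those of $c+\frac{1}{\sigma}(x-c)$ with respect to $S$; this follows directly from (\ref{nev_uhl_lagr_sums}) by substituting $y^{(j)}$ into $x=\sum_j \mu_j(x)y^{(j)}$, $\sum_j\mu_j(x)=1$. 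Since each $\lambda_j$ is affine and $\lambda_j(c)=\frac{1}{n+1}$ (the center of gravity has all barycentric coordinates equal to $\frac{1}{n+1}$), this yields the clean identity
\[
\mu_j(x)=\frac{1}{n+1}+\frac{1}{\sigma}\left(\lambda_j(x)-\frac{1}{n+1}\right).
\]
In particular the linear part of $\mu_j$ is a positive multiple of that of $\lambda_j$, so the hyperplane $\mu_j(x)=0$ is parallel to $\lambda_j(x)=0$; this confirms that $\{x\in\sigma S:\mu_j(x)=0\}$ is indeed the $j$th facet of $\sigma S$ in the sense defined above.

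Next I would reduce membership of a vertex $v$ of $Q_n$ in the $j$th facet of $\xi(S)S$ to the single scalar equation $\mu_j(v)=0$. The key point is that $Q_n\subset\xi(S)S$ holds by the very definition of $\xi(S)$, hence every vertex $v$ satisfies $\mu_k(v)\ge 0$ for all $k$. Consequently $v$ lies on the $(n-1)$-face carried by the hyperplane $\mu_j(x)=0$ precisely when $\mu_j(v)=0$: the remaining barycentric coordinates are automatically nonnegative, so no extra facet-interior conditions are needed. This is exactly the step where the a priori inclusion $Q_n\subset\xi(S)S$ is essential, and it is the main thing to verify carefully; without it, $\mu_j(v)=0$ would only place $v$ on the supporting hyperplane, not in the face itself.

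Finally I would set $\sigma=\xi(S)$ and combine the two observations. Solving $\mu_j(v)=0$ in the identity above gives $-\lambda_j(v)=\frac{\xi(S)-1}{n+1}$. By the formula (\ref{nev_uhl_xi_s_cub_formula}) for a cube, $\xi(S)=(n+1)M+1$ where $M=\max_{1\le k\le n+1,\,x\in\ver(Q_n)}(-\lambda_k(x))$, so $\frac{\xi(S)-1}{n+1}=M$. Hence $\mu_j(v)=0$ is equivalent to $-\lambda_j(v)=M$, which is precisely (\ref{nev_uhl_cub_lambd}). I expect the only genuine obstacle to be the facet-versus-hyperplane distinction handled in the second paragraph; the remaining steps are routine affine algebra, and the explicit formula for $\mu_j$ does all the work once it is in hand.
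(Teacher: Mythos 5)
Your proposal is correct and follows essentially the same route as the paper: both reduce the claim to the fact that the $j$th facet of $\xi(S)S$ lies in the hyperplane $\lambda_j(x)=\frac{1-\xi(S)}{n+1}$ (you derive this via the identity $\mu_j(x)=\frac{1}{n+1}+\frac{1}{\sigma}\bigl(\lambda_j(x)-\frac{1}{n+1}\bigr)$, the paper via a homothety/linear-functional computation) and then invoke formula (\ref{nev_uhl_xi_s_cub_formula}) to identify $\frac{\xi(S)-1}{n+1}$ with the maximum in (\ref{nev_uhl_cub_lambd}). Your explicit handling of the facet-versus-hyperplane point via $Q_n\subset\xi(S)S$ is a detail the paper leaves implicit, but it is the same argument.
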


\begin{proof} Denote $\xi:=\xi(S)$. First note that
the equation of hyperplane containing the $j$th facet of  simplex $\xi S$ can be written in the form
\begin{equation}\label{nev_uhl_cub_lambd_1xin1}
\lambda_j(x)=\frac{1-\xi}{n+1}. 
\end{equation}

Indeed, this equation has a form $\lambda_j(x)=a$. 
Remind that $\lambda_j(x)$ is a linear function of $x$. Since the center of gravity of 
$S$ is contained in the hyperplane $\lambda_j(x)=\frac{1}{n+1}$ and 
the $j$th facet of $S$ belongs to the hyperplane $\lambda_j(x)=0$, we have
$$
\frac{a-\frac{1}{n+1}}{0-\frac{1}{n+1}}=\xi.
$$
Hence, $a=\frac{1-\xi}{n+1}$.

Let us give another proof of this fact.
Let  $c$ be the center of gravity of $S$, $y$ be the vertex of  $S$ not coinciding with 
$x^{(j)}$, and $z$ be the homothetic image of the point $y$ with  coefficient of 
homothety $\xi$ and center of homothety at $c$. Then
$z-c=\xi(y-c)$ and $z=\xi y+(1-\xi)c$. 
The functional 
$\mu_j(x):= \lambda_j(x)-\lambda_j(0)$ is a linear (homogeneous and additive) 
functional on ${\mathbb R}^n$. Since $\lambda_j(y)=0$ and $\lambda_j(c)=\frac{1}{n+1},$
we have
$$\lambda_j(z)-\lambda_j(0)=\mu_j(z)=\mu_j(\xi y+(1-\xi)c)=\xi\mu_j(y)+(1-\xi)\mu_j(c)=$$
$$=\xi[\lambda_j(y)-\lambda_j(0)]+(1-\xi)[\lambda_j(c)-\lambda_j(0)]=
\frac{1-\xi}{n+1}-\lambda_j(0).$$
This implies $\lambda_j(z)=\frac{1-\xi}{n+1}$. 
The point $z$ belongs to the  $j$th facet of simplex
$\xi S$, therefore, the equation of hyperplane cointaining this facet has the form (\ref{nev_uhl_cub_lambd_1xin1}).

Now assume that $v\in\ver(Q_n)$ satisfy the equation  (\ref{nev_uhl_cub_lambd_1xin1}). Since
\begin{equation}\label{nev_uhl_xi_cub_lambda_formula}
\xi=(n+1)
\max_{1\leq k\leq n+1, x\in \ver(Q_n)} (-\lambda_k(x))+1 ,
\end{equation}
see (\ref{nev_uhl_xi_s_cub_formula}), the right part of (\ref{nev_uhl_cub_lambd}) is equal to 
$\frac{\xi-1}{n+1}$. Therefore, $v$ belongs to the  $j$th facet of $\xi S$. 
On the other hand, if $v$ belongs to the $j$th 
facet of $\xi S$, then $-\lambda_j(v)=\frac{\xi-1}{n+1}$. 
By (\ref{nev_uhl_xi_cub_lambda_formula}),  this is equivalent to (\ref{nev_uhl_cub_lambd}). 
This completes the proof.
\end{proof}


\section{On a Simplex Satisfying the Condition  \newline
$S\subset Q \subset n S$}
\label{nev_uhl_p_center}

In this section we denote by $c(D)$ the center of gravity of a convex body $D$.

\begin{theorem}\label{nev_uhl_theor_center}
Let  $Q$ be a nondegenerate parallelotope and $S$ is a nondegenerate simplex in ${\mathbb R}^n$.
If
$S\subset Q \subset nS$, then $c(S)=c(Q)$.
\end{theorem}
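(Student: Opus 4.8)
The plan is to exploit the tight sandwiching $S\subset Q\subset nS$ together with formula~(\ref{nev_uhl_xi_geq_alpha_geq_n}). The hypothesis $Q\subset nS$ means $\xi(Q;S)\le n$, while $S\subset Q$ forces $\xi(Q;S)\ge\alpha(Q;S)\ge n$ by the chain of inequalities in~(\ref{nev_uhl_xi_geq_alpha_geq_n}) (rescaled from $Q_n$ to the general parallelotope $Q$). Hence we are in the \emph{equality case} $\xi(Q;S)=\alpha(Q;S)=n$, and by the remark following~(\ref{nev_uhl_alpha_lij_formula}) equality $\xi=\alpha$ occurs precisely when the simplex $nS$ is circumscribed around $Q$. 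So the first step is to record that all these quantities collapse to $n$ and that $nS$ is circumscribed around $Q$, meaning each of the $n+1$ facets of $nS$ touches $Q$ and every axial diameter is extremal.

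Next I would translate the circumscription condition into barycentric language. By~(\ref{nev_uhl_circum_conv_cond}), equality of the $n+1$ maxima $\max_{x\in Q}(-\lambda_k(x))$ holds, and each common value equals $\frac{\xi-1}{n+1}=\frac{n-1}{n+1}$. The key structural step is to understand \emph{where} these maxima are attained: each linear functional $\lambda_k$ attains its extreme values over the parallelotope $Q$ at vertices of $Q$, and the face of $nS$ opposite $x^{(k)}$ must support $Q$. I would argue that the contact configuration is symmetric about the common center: because a linear functional on a centrally symmetric body $Q$ satisfies $\max_{x\in Q}(-\lambda_k(x))+ \max_{x\in Q}\lambda_k(x)$ being controlled by the value at the center $c(Q)$, one gets $\lambda_k(c(Q))=\tfrac{1}{n+1}$ for every $k$.

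That last identity is the crux. Concretely, for a linear functional the average of its maximum and minimum over a centrally symmetric $Q$ equals its value at the center: $\lambda_k(c(Q))=\tfrac12\bigl(\max_{x\in Q}\lambda_k(x)+\min_{x\in Q}\lambda_k(x)\bigr)$. Here $\min_{x\in Q}\lambda_k=-\max_{x\in Q}(-\lambda_k)=-\frac{n-1}{n+1}$. The matching upper value $\max_{x\in Q}\lambda_k$ I would pin down using $\sum_{k}\lambda_k\equiv1$ together with $\xi(Q;S)=1$-type reasoning, or more directly from the fact that the \emph{opposite} facet contact (the vertex $x^{(k)}\in S\subset Q$ lies inside, giving $\lambda_k\le1$ somewhere and the inclusion $S\subset Q$ controlling the positive side), yielding $\max_{x\in Q}\lambda_k=\frac{2}{n+1}$ so that the average is exactly $\frac{1}{n+1}$. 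Then $\lambda_k(c(Q))=\frac1{n+1}$ for all $k$ identifies $c(Q)$ as the point with all barycentric coordinates equal to $\frac1{n+1}$, which by~(\ref{nev_uhl_lagr_sums}) is precisely the centroid $c(S)=\frac1{n+1}\sum_j x^{(j)}$.

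The main obstacle I anticipate is justifying that the maximum \emph{and} the minimum of each $\lambda_k$ over $Q$ are \emph{both} extremal in the symmetric way required — i.e.\ controlling the positive side $\max_{x\in Q}\lambda_k$, not just the negative side that formula~(\ref{nev_uhl_xi_c_formula}) hands us. The inclusion $S\subset Q$ only directly bounds where vertices sit; turning it into the exact value $\frac{2}{n+1}$ on the positive side requires using that \emph{all} $n+1$ facets touch $Q$ simultaneously and that the $\lambda_k$ sum to one, so the slack on one functional is forced by the others. I expect this balancing argument, rather than any single computation, to carry the proof, after which the conclusion $c(S)=c(Q)$ is immediate.
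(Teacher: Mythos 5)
Your proposal is correct and is essentially the paper's own proof, phrased coordinate-free. The skeleton is identical: from $S\subset Q\subset nS$ one gets $\xi(Q;S)=\alpha(Q;S)=n$, hence $nS$ is circumscribed around $Q$ and $\max_{x\in Q}(-\lambda_k(x))=\frac{n-1}{n+1}$ for every $k$; the bound $\max_{x\in Q}\lambda_k(x)\ge\lambda_k\left(x^{(k)}\right)=1$ coming from $S\subset Q$ then gives $\lambda_k(c(Q))\ge\frac{1}{n+1}$, and since $\sum_k\lambda_k(c(Q))=1$ all these inequalities must be equalities, which identifies $c(Q)$ with the centroid of $S$. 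Your central-symmetry averaging identity $\lambda_k(c(Q))=\frac12\left(\max_{Q}\lambda_k+\min_{Q}\lambda_k\right)$ is exactly what the paper computes in coordinates after normalizing $Q$ to $[-1,1]^n$: there $\lambda_j(0)=l_{n+1,j}=\frac12\left[\left(\sum_i|l_{ij}|+l_{n+1,j}\right)-\left(\sum_i|l_{ij}|-l_{n+1,j}\right)\right]$, i.e.\ half of $\max_{Q}\lambda_j-\max_{Q}(-\lambda_j)$, which is the same identity evaluated at the center $0$. The one error is numerical: the positive-side value is $\max_{Q}\lambda_k=1$, not $\frac{2}{n+1}$ — it is the \emph{sum} $\max_{Q}\lambda_k+\min_{Q}\lambda_k$ that equals $\frac{2}{n+1}$, as required for the average to be $\frac{1}{n+1}$. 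This slip is harmless because, as your final paragraph correctly anticipates, the exact value is never needed: the one-sided inequality $\max_{Q}\lambda_k\ge 1$ together with the sum-to-one balancing argument closes the proof, exactly as in the paper, and the equality $\max_{Q}\lambda_k=1$ then falls out as a byproduct (the paper records it as a concluding remark).
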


\begin{proof}
Any two nondegenerate parallelotopes in ${\mathbb R}^n$ are affine equivalent. 
Corresponding nondegenerate affine transformation maps a simplex to a simplex.
This transformation maps the center of gravity of each polyhedron into the center of gravity of
the image of this polyhedron.
Therefore, it is enough to prove the theorem in the case 
$Q=Q_n'=[-1,1]^n$. 
Namely, we will show that if $Q=Q_n'$ and conditions of the theorem are satisfied, then $c(S)=c(Q_n')=0$.

Let $x^{(j)}$ be vertices of $S$, and $\lambda_j$ $(1\leq j\leq n+1)$ be basic Lagrange polynomials of~$S$.

Since $S\subset Q_n' \subset nS$, we have $\xi(Q_n';S)\leq n$. 
For any simplex $T\subset Q$, holds $\xi(Q_n';T)\geq \alpha(Q_n';T)\geq n$. 
Hence, $\xi(Q_n';S)=\alpha(Q_n';S)=n$. 
It follows that simplex  $nS$ is circumscribed around $Q_n'$. 
So, the following equalities hold
$$
\max_{x\in\ver(Q_n')}(-\lambda_1(x))=
\ldots=
\max_{x\in\ver(Q_n')}(-\lambda_{n+1}(x)).
$$
In other words, $\max_{x\in\ver(Q_n')}(-\lambda_j(x))$ does not depend on $j$. 
From the equality
$$
n=\xi(Q_n';S)=(n+1)\max_{1\leq j\leq n+1, x\in\ver(Q_n')} (-\lambda_j(x))+1
$$
we obtain, for any $j$, 
\begin{equation}\label{nev_uhl_max_labd_n1n1}
\max\limits_{x\in\ver(Q_n')} (-\lambda_j(x))=\frac{n-1}{n+1}.
\end{equation}

Let  $l_{ij}$ be the coefficients of $\lambda_j$, i.\,e.,
$\lambda_j(x)=l_{1j}x_1+\ldots+l_{nj}x_n+l_{n+1,j}$.
Then
\begin{equation}\label{nev_uhl_max_sum_lij}
\max_{x\in\ver(Q_n')} (-\lambda_j(x))=\sum_{i=1}^n |l_{ij}|-l_{n+1,j}.
\end{equation}
Consider the value
$\max\limits_{x\in Q_n'} \lambda_j(x)$. 
It is easy to see that
\begin{equation}\label{nev_uhl_max_sum_lij_1}
\max_{x\in Q_n'} \lambda_j(x)=\sum_{i=1}^n |l_{ij}|+l_{n+1,j}.
\end{equation}
Since $x^{(j)}\in Q_n'$, we have
\begin{equation}\label{nev_uhl_max_q_labd1}
\max_{x\in Q_n'} \lambda_j(x)\geq \lambda_j\left(x^{(j)}\right)=1. 
\end{equation}

Using (\ref{nev_uhl_max_labd_n1n1})--(\ref{nev_uhl_max_q_labd1}), we obtain
$$2\cdot l_{n+1,j}=
\left[ 
\sum_{i=1}^n |l_{ij}|+l_{n+1,j}\right]-
\left[ 
\sum_{i=1}^n |l_{ij}|-l_{n+1,j}\right]=$$
$$=\max_{x\in Q_n'} \lambda_j(x)-  
\max_{x\in\ver(Q_n')} (-\lambda_j(x))\geq
1-\frac{n-1}{n+1}=\frac{2}{n+1}.$$
Consequently, for any  $j$, the following inequality holds:
\begin{equation}\label{nev_uhl_lam0}
\lambda_j(0)=l_{n+1,j}\geq \frac{1}{n+1}. 
\end{equation}
The numbers $\lambda_j(0)$ are the barycentric coordinates of the point $x=0$ (see (\ref{nev_uhl_lagr_sums})),
hence,
\begin{equation}\label{nev_uhl_sumlam0}
\sum_{j=1}^{n+1}\lambda_j(0)=1. 
\end{equation}
If, for some  $j$, inequality (\ref{nev_uhl_lam0}) is strict, then the left part of
(\ref{nev_uhl_sumlam0}) is strictly greater than the right part. This is not possible, therefore,
\begin{equation}\label{nev_uhl_lam0_1n1}
\lambda_j(0)=\frac{1}{n+1}, \quad 1\leq j\leq n+1. 
\end{equation}
Thus, we have
$$c(S)=\frac{1}{n+1}\sum_{j=1}^{n+1}x^{(j)}=\sum_{j=1}^{n+1}\lambda_j(0)x^{(j)}=0.$$

In addition, note that for any  $j$ we have 
$\max\limits_{x\in Q_n'} \lambda_j(x)=1$. 
Indeed,  from (\ref{nev_uhl_max_sum_lij}), (\ref{nev_uhl_max_sum_lij_1}), and (\ref{nev_uhl_lam0_1n1}) it follows
$$\max_{x\in Q_n'} \lambda_j(x)=\sum_{i=1}^n |l_{ij}|+l_{n+1,j}=
\max\limits_{x\in\ver(Q_n')} (-\lambda_j(x))+2l_{n+1,j}=
$$
$$
=\frac{n-1}{n+1}+
\frac{2}{n+1}=1.
$$
Hence, if 
$S\subset Q_n' \subset nS$, then the cube $Q_n'$ lies in the intersection of half-spaces
$\lambda_j(x)\leq 1$. 
The theorem is proved.
\end{proof}


\section{The Case when $n+1$ is an Hadamard Number}
\label{nev_uhl_p_adamarcase}

A nondegenerate $(m\times m)$-matrix
${\bf H}_m$  is called {\it an Hadamard matrix of order $m$} if its elements are equal 
to $1$ or $-1$ and
$${\bf H}_m^{-1}=
\frac{1}{m}{\bf H}_m^T.$$
Some information on Hadamard matrices can be found in \cite{nu_bib_6}.
It is known that if
${\bf H}_m$ exists, then $m=1,$ $m=2$ or $m$ is a multiple of $4$. 
It is established that ${\bf H}_m$ exists for infinite set of numbers $m$ 
of the form $m=4k$, including powers of two $m=2^l$. 
By 2008, the smallest number $m$ for which it is unknown whether there is an Hadamard 
matrix of order $m$ was equal to $668$. 
We call a natural number $m$ {\it an Hadamard number} if there exist an Hadamard matrix of order  $m$.

The regular simplex $S$, inscribed into $Q_n$ in such a way that vertices of $S$ are situated 
in vertices of $Q_n$, exists if and only if $n+1$ is an Hadamard number (see \cite{nu_bib_7}, Theorem~4.5). 
If  $n+1$ is an Hadamard number, then $\xi(S)=n$ and, therefore, $\xi_n=n$. The latter statement
was proved by two different methods in the paper \cite{nu_bib_2} and in the book \cite{nu_bib_3}, \S\,3.2.

Here we give yet another proof of this fact. This new proof differs from the proofs given in \cite{nu_bib_5} and~\cite{nu_bib_6} and directly utilizes properties of Hadamard matrices. 
We also establish some other properties of regular simplex $S$.

\begin{theorem}\label{nev-uhl-adamartheorem}
Let $n+1$ be an Hadamard number and $S$ be a regular simplex inscribed in  $Q_n$. Then
$\xi_n=\xi(S)=n$. 
\end{theorem}

\begin{proof}
Taking into consideration similarity, we can prove the statement for the cube $Q^\prime_n:=[-1,1]^n$.
Since $n+1$ is an Hadamard number, there exist a {\it normalized Hadamard matrix} of order $n+1$, 
i.\,e., such an Hadamard matrix that its first row and first column consist of  $1$'s (see \cite{nu_bib_9}, Chapter\,14).
Let us write rows of this matrix in inverse order:
$${\bf H} =
\left( \begin{array}{ccccc}
1&1&1&\ldots&1\\
\ldots&\ldots&\ldots&\ldots&1\\
\ldots&\ldots&\ldots&\ldots&1\\
\ldots&\ldots&\ldots&\ldots&\ldots\\
\ldots&\ldots&\ldots&\ldots&1\\
\end{array}
\right).$$
The obtained matrix {\bf H} also is an Hadamard matrix of order $n+1$. 
Consider the simplex $S^\prime$ with vertices 
formed by first $n$ numbers in rows of $\bf H$.

It is clear that all vertices of $S^\prime$ are also the vertices of $Q^\prime_n$, therefore,
the simplex is inscribed into the cube. Let us show that $S^\prime$ is a regular simplex
and the lengths of its edges are equal to $\sqrt{2(n+1)}$. 
Let  $a$, $b$ be two different rows of ${\bf H}$. All elements of 
$\bf H$ are $\pm 1$, hence, we have 
$\|a\|^2=\|b\|^2=n+1$. 
Rows of an Hadamard matrix are mutually orthogonal, therefore,
$$
\|a-b\|^2=(a-b,a-b)=\|a\|^2+\|b\|^2-2(a,b)=2(n+1).
$$
Denote by $u$ and $w$ the vertices of  $S^\prime$ obtained from  $a$ and $b$
respectively by removing the last component. 
This last component is equal to 1. It follows that $n$-dimensional length of the vector $u-w$ is 
equal to $(n+1)$-dimensional length  of the vector $a-b$, i.\,e., $\|u-w\|=\sqrt{2(n+1)}$. 

Denote by $\lambda_j$ basic Lagrange polynomials of $S^\prime$. 
Since ${\bf H}^{-1}=
\frac{1}{n+1}{\bf H}^T$, 
the coefficients of 
$(n+1)\lambda_j$ are situated in rows of  ${\bf H}$. 
All constant terms of these polynomials stand in the last column of ${\bf H}$. 
Consequently, they are equal to $1$. It means that constant terms of polynomials
$-(n+1)\lambda_j$ are equal to $-1$.
By this reason, for any $j=1,\ldots,n+1$, 
$$(n+1)\max_{x\in\ver(Q^\prime_n)}(-\lambda_j(x))=n-1.$$
The coefficients of polynomials $-(n+1)\lambda_j$ are equal to $\pm 1$, 
therefore, for any $j$, the vertex  $v$ of $Q^\prime_n$, such that $(n+1)(-\lambda_j(v))=n-1$,
is unique.
Indeed, $v=(v_1,\ldots,v_n)$ is defined by equalities $v_i=- \operatorname{sign} l_{ij}$, where  $l_{ij}$ are the  coefficients of $\lambda_j$. 

Now let us find $\xi(Q^\prime_n; S^\prime)$ using formula (\ref{nev_uhl_xi_s_cub_formula}) 
for $C=Q^\prime_n$.
We have
$$\xi(Q^\prime_n;S^\prime)=(n+1)\max_{1\leq j\leq n+1}
\max_{x\in \ver(Q^\prime_n)}(-\lambda_j(x))+1=n-1+1=n.
$$

Consider the similarity transformation which maps $Q^\prime_n$ into $Q_n$.
This transformation also maps $S^\prime$ into a simplex inscribed into $Q_n$. 
Denote by  $S$ the image of $S^\prime$.
Obviously, $\xi(S)=\xi(S^\prime;Q^\prime_n)=n$.
It follows that, if $n+1$ is an Hadamard number, then $\xi_n\leq n$. 
As we know, for any $n$, $\xi_n\geq n$ (see (\ref{nev_uhl_xi_geq_alpha_geq_n})). 
Hence, $\xi_n=\xi(S)=n.$ 
Since the coefficient of similarity for mapping $Q^\prime_n$
to $Q_n$ is equal to $\frac{1}{2}$, the length of any edge of $S$ is equal to $\sqrt{   \frac{n+1}{2}  }$.

From the condition
$$
\max\limits_{x\in \ver(Q^\prime_n)} \left(-\lambda_1(x)\right)=
\ldots=
\max\limits_{x\in \ver(Q^\prime_n)} \left(-\lambda_{n+1}(x)\right)=\frac{n-1}{n+1},
$$
it follows that simplex
$nS^\prime$ 
is circumscribed around the cube $Q^\prime_n$. 
From the said above it also follows that
each $(n-1)$-face of $nS^\prime$ 
contains only one vertex of $Q^\prime_n$.
This means that simplex $nS$ is circumscribed around the cube $Q_n$ in the same way.
Since $nS$ is circumscribed around $Q_n$, we have $\alpha(S)=\xi(S)=n$
and $d_i(S)=1$. These equalities could be also obtained from inequalities
$n\leq \alpha(S)\leq \xi(S)$ and equality $\xi(S)=n$. 

Equalities  $d_i(S)=1$ and $\alpha(S)=n$ can be also derived in another way.
Note that the regular simplex $S$ inscribed into $Q_n$ has the maximum volume 
among all the simplices in  $Q_n$, see Theorem~2.4 in \cite{nu_bib_7}.
Additionally, for any simplex of maximum volume in $Q_n$, all the  axial diameters are equal to $1$. 
The latter property of a maximum volume simplex in $Q_n$ was established by Lassak in \cite{nu_bib_8}.
This fact also can be obtained from  (\ref{nev_uhl_alpha_d_i_formula}) (see \cite{nu_bib_10} and \cite{nu_bib_3}, \S\,1.6]). 

The theorem is proved.
\end{proof}


\section{Perfect Simplices in ${\mathbb R}^1$ and ${\mathbb R^3}$}
\label{nev_uhl_p_n13}

The case  $n=1$ is very simple. 
For the segment $S=[0,1]$,  we have $S= Q_1$.
Therefore, $\xi_1= 1$ and $S$ is the unique perfect simplex.
The equality $\xi_1= 1$ could be also obtained  from Theorem~\ref{nev-uhl-adamartheorem}.

Consider the case $n=3$. The conditions of 
Theorem~\ref{nev-uhl-adamartheorem} are satisfied, hence, $\xi_3=3$. There exist a regular simplex $S$ inscribed into $Q_3$ such that $Q_3\subset 3S$.
As an example, we consider the simplex $S_1$ with vertices
$(0, 0, 0)$,
$(1, 1, 0)$,
$(1, 0, 1)$,
$(0, 1, 1)$.
Matrices  ${\bf A}$ and ${\bf A}^{-1}$ for $S_1$ have the form
$$
{\bf A}=
\left(
\begin{array}{cccc}
 0 & 0 & 0 & 1 \\
 1 & 1 & 0 & 1 \\
 1 & 0 & 1 & 1 \\
 0 & 1 & 1 & 1 \\
\end{array}
\right),\quad
{\bf A}^{-1}=\frac{1}{2}
\left(
\begin{array}{rrrr}
 -1 & 1 & 1 & -1 \\
 -1 & 1 & -1 & 1 \\
 -1 & -1 & 1 & 1 \\
 2 & 0 & 0 & 0 \\
\end{array}
\right).
$$
Using (\ref{nev_uhl_d_i_formula}), we get $\frac{1}{d_i(S)}=\frac{1}{2}\sum_{j=1}^{4} \left|l_{ij}\right|=1$, i.\,e., $d_i(S_1)=1$.
It follows from (\ref{nev_uhl_alpha_d_i_formula}) that
$\alpha(S_1)=\sum_{i=1}^3\frac{1}{d_i(S_1)} = 3$.
Basic Lagrange polynomials of $S_1$ are
\begin{align*}
\lambda_1(x)&=  \frac{1}{2} \left(-x_1-x_2-x_3+2\right), &
\lambda_2(x)&= \frac{1}{2} \left(x_1+x_2-x_3\right), \\
\lambda_3(x) &= \frac{1}{2} \left(x_1-x_2+x_3\right),  &
 \lambda_4(x) &= \frac{1}{2} \left(-x_1+x_2+x_3\right).
\end{align*}
By (\ref{nev_uhl_xi_s_cub_formula}), we have
$$
\xi(S_1)=4\max_{1\leq k\leq 4} \,\max_{x\in \ver(Q_3)}(-\lambda_k(x))+1.
$$
Substituting vertices of  $Q_3$ into basic Lagrange polynomials we obtain 
\begin{equation}\label{nev_uhl_max12}
\max_{1\leq k\leq 4} \,\max_{x\in \ver(Q_3)}(-\lambda_k(x))=\frac{1}{2}.
\end{equation}
Hence, $\xi(S_1)=4 \cdot  \frac{1}{2} +1 =3$.
This implies $\xi(S_1) = \xi_3$.
For each $k$, maximum in (\ref{nev_uhl_max12}) is achieved only for one vertex of $Q_3$:
\begin{equation}\label{nev_uhl_lam3vertcond}
-\lambda_1(1,1,1) = -\lambda_2(0,0,1) = -\lambda_3(0,1,0)= -\lambda_4(1,0,0) = \frac{1}{2}.
\end{equation}
It follows from Theorem~\ref{nev_uhl_lem_vertices} and (\ref{nev_uhl_lam3vertcond}) that
the extremal vertices of the cube, i.\,e., the vertices
$(1,1,1)$, $(0,0,1)$, $(0,1,0)$, $(1,0,0)$ belong to the facets of the simplex $3S_1$. Every facet of $3S_1$ contains only one vertex of $Q_3$. 
The rest vertices of the cube belong to the interior of $3S_1$.
Thus, though for $S_1$ we have $S_1 \subset Q_3 \subset 3S_1$,  the simplex $S_1$ is not perfect.

From (\ref{nev_uhl_max_center}), we obtain that three maximum segments in $S_1$  parallel to coordinate axes are intersected at the center of the cube.

\begin{figure}
\includegraphics[width=1.0\textwidth]{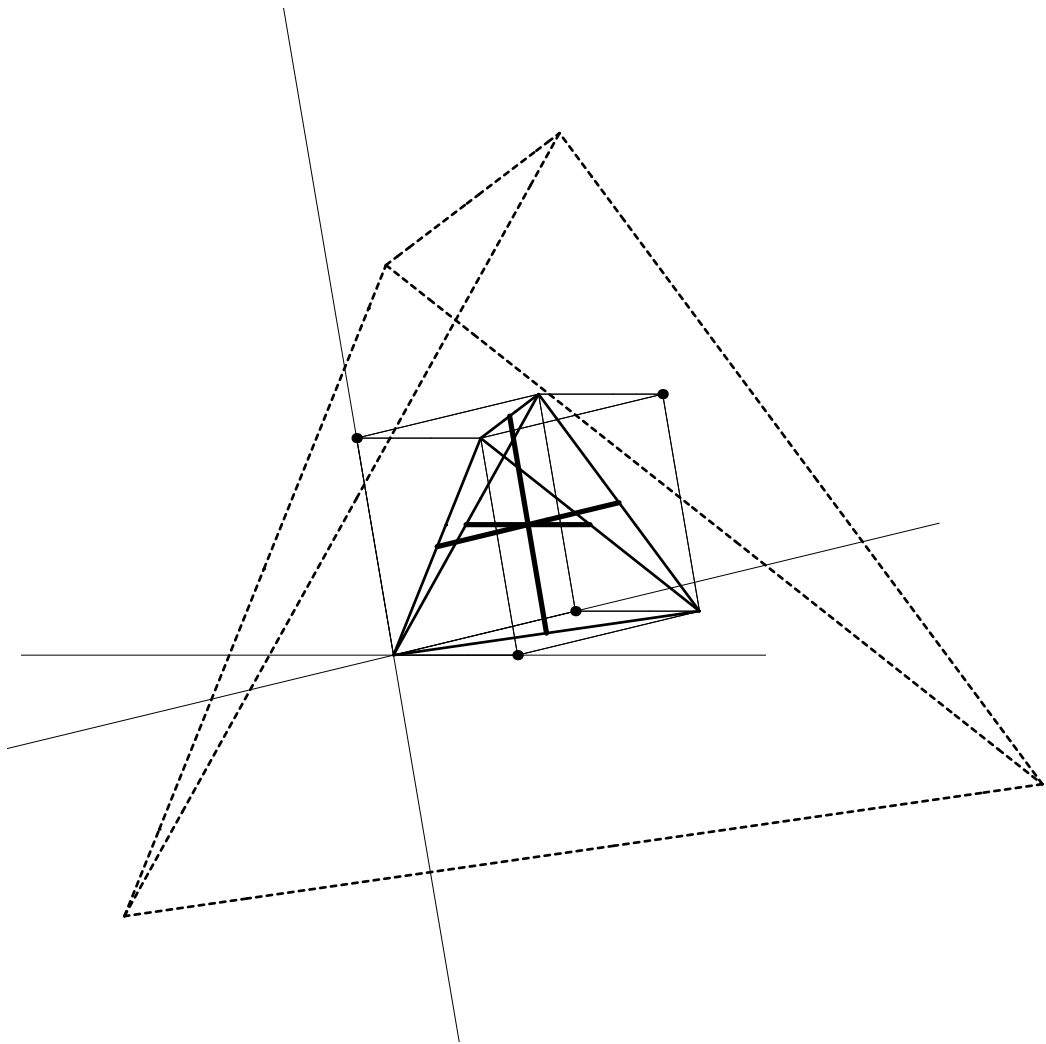}
\caption{Simplex  $S_1$}
\label{fig:nev_ukl_simplex_s1}      
\end{figure}

\begin{figure}
\includegraphics[width=1.0\textwidth]{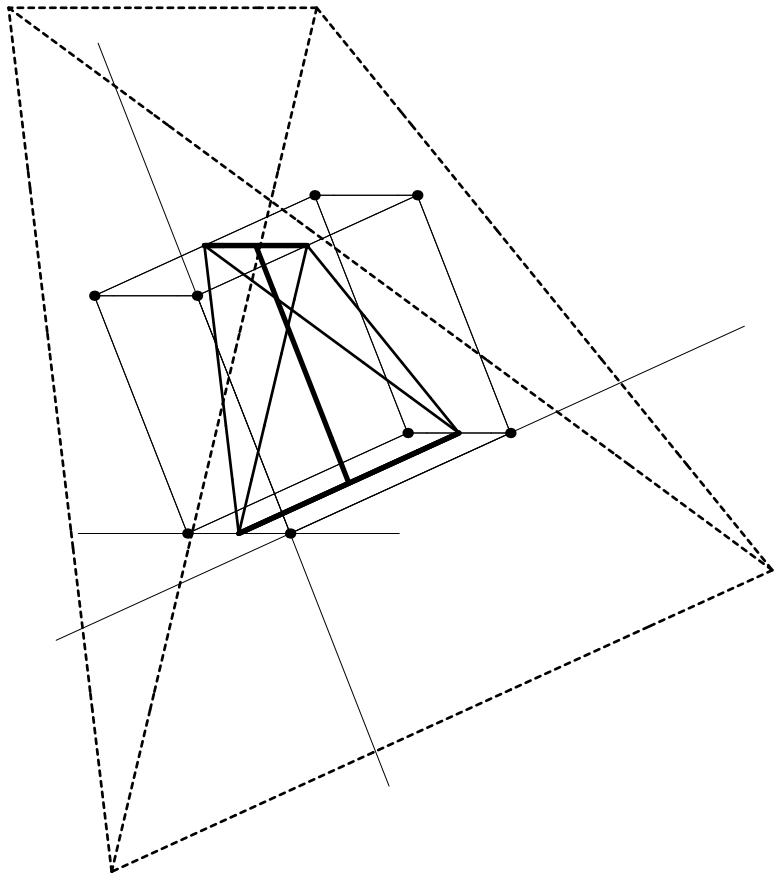}
\caption{Simplex  $S_2$}
\label{fig:nev_ukl_simplex_s2}      
\end{figure}

Now consider the simplex  $S_2$ with vertices
$\left(\frac{1}{2},0,0\right)$,  
$\left(\frac{1}{2},1,0\right)$, 
$\left(0,\frac{1}{2},1\right)$,
$\left(1,\frac{1}{2},1\right)$.
For this simplex,
$$
{\bf A}=
\left(
\arraycolsep=5pt\def\arraystretch{1.2}
\begin{array}{cccc}
 \frac{1}{2} & 0 & 0 & 1 \\
 \frac{1}{2} & 1 & 0 & 1 \\
 0 & \frac{1}{2} & 1 & 1 \\
 1 & \frac{1}{2} & 1 & 1 \\
\end{array}
\right),\quad
{\bf A}^{-1}=\frac{1}{2}
\left(
\arraycolsep=4pt\def\arraystretch{1.0}
\begin{array}{rrrr}
 0 & 0 & -2 & 2 \\
 -2 & 2 & 0 & 0 \\
 -1 & -1 & 1 & 1 \\
 2 & 0 & 1 & -1 \\
\end{array}
\right).
$$
We have $\frac{1}{d_i(S_2)}=\frac{1}{2}\sum_{j=1}^{4} \left|l_{ij}\right|=1$.  
Consecuently $d_i(S_2)=1$ and $\alpha(S_2) =\sum_{i=1}^3\frac{1}{d_i(S_2)} = 3$.
Basic Lagrange polynomials of $S_2$ are
\begin{align*}
\lambda_1(x)&=  -x_2-\frac{1}{2}\, x_3+1, &
\lambda_2(x)&= x_2-\frac{1}{2}\, x_3, \\
\lambda_3(x) &= \frac{1}{2} \left(-2 x_1+x_3+1\right),  &
 \lambda_4(x) &= \frac{1}{2} \left(2 x_1+x_3-1\right).
\end{align*}
The formula (\ref{nev_uhl_xi_s_cub_formula}) takes the form
$$
\xi(S_2)=4\max_{1\leq k\leq 4} \,\max_{x\in \ver(Q_3)}(-\lambda_k(x))+1.
$$
Substituting vertices of $Q_3$ into polynomials we obtain
\begin{equation} \label{nev_uhl_max12vert}
\begin{array}{l}
- \lambda_1(0,1,1) = - \lambda_1(1,1,1) = - \lambda_2(0,0,1) = - \lambda_2(1,0,1) = \\
- \lambda_3(1,0,0) = - \lambda_3(1,1,0) = - \lambda_4(0,0,0) = - \lambda_4(0,1,0) = \frac{1}{2}.
\end{array}
\end{equation}
Hence, $\max_{k,\,x\in \ver(Q_3)}(-\lambda_k(x))=\frac{1}{2}$ and 
$\xi(S_2)=4 \cdot  \frac{1}{2} +1 =3=\xi_3$.
In this case all the vertices of $Q_3$ are extremal.
It follows from Theorem~\ref{nev_uhl_lem_vertices} and (\ref{nev_uhl_max12vert}) that
each vertex of  $Q_3$ belongs to the boundary of $3S_2$. 
Therefore, $S_2$, unlike $S_1$, is a perfect simplex.

It is proved in  \cite{nu_bib_3} that $S_1$ and $S_2$  
are the only simplices in $Q_3$ (up to similarity) with property $\xi(S)=\xi_3=3$.
Each perfect simplex in ${\mathbb R}^3$ is similar to  $S_2$.

The simplices $S_1$, $3S_1$, $S_2$, $3S_2$ are shown in Fig.~\ref{fig:nev_ukl_simplex_s1} and Fig.~\ref{fig:nev_ukl_simplex_s2}. 
Vertices of $Q_3$ situated on the boundary of $3S_1$ and $3S_2$ are marked with bold points.
Bold lines mark the segments corresponding to the axial diameters.

\section{The Exact Value of  $\xi_5$}
\label{nev_uhl_p_xi5is5}

\begin{theorem}\label{nev-uhl-ksi5is5}
There exist simplex $S \subset Q_5$ such that  simplex $5S$ is circumscribed 
around  $Q_5$ and boudary  of 
$5S$ contains all vertices of the cube. 
\end{theorem}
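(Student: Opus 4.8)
The plan is to prove this theorem constructively, exactly as the three-dimensional case was handled through the simplex $S_2$: I would exhibit an explicit nondegenerate simplex $S$ with vertices in $Q_5$ and verify directly that $5S$ is circumscribed around $Q_5$ with every vertex of the cube lying on its boundary. Since $n+1=6$ is not a multiple of $4$, it is not an Hadamard number, so Theorem~\ref{nev-uhl-adamartheorem} does not apply and no regular simplex with vertices among those of $Q_5$ is available; the desired simplex must instead place some vertices in the interiors of faces of the cube (with coordinates such as $\frac{1}{2}$), and by Theorem~\ref{nev_uhl_theor_center} its centroid is forced to be the center $\left(\frac{1}{2},\ldots,\frac{1}{2}\right)$ of $Q_5$. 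This centroid constraint, together with the requirement that all $32$ cube vertices be covered by only $6$ facets, is what makes locating the simplex the genuine difficulty.

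Once a candidate $S$ is fixed, I would form the matrix $\mathbf{A}$ of its vertices and compute $\mathbf{A}^{-1}=(l_{ij})$, which supplies the basic Lagrange polynomials $\lambda_j$ and, via (\ref{nev_uhl_d_i_formula}), the axial diameters. I expect to find $d_i(S)=1$ for all $i$, so that $\alpha(S)=\sum_{i=1}^5 1/d_i(S)=5$ by (\ref{nev_uhl_alpha_d_i_formula}). The key computation is then to evaluate $\max_{1\le k\le 6}\max_{x\in\ver(Q_5)}(-\lambda_k(x))$: showing this equals $\frac{2}{3}$ yields, through (\ref{nev_uhl_xi_s_cub_formula}), the value $\xi(S)=6\cdot\frac{2}{3}+1=5=\xi_5$, which also confirms $Q_5\subset 5S$.

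The verification of the two geometric conditions is best organized as the $6\times 32$ table of numbers $-\lambda_j(v)$, $1\le j\le 6$, $v\in\ver(Q_5)$. Circumscription of $5S$ around $Q_5$ follows from the equal-maxima condition (\ref{nev_uhl_circum_cub_cond}), i.e. that every row-maximum equals $\frac{2}{3}$, which places at least one cube vertex on each facet; by Theorem~\ref{nev_uhl_lem_vertices} a vertex $v$ lies on the $j$th facet of $5S$ precisely when $-\lambda_j(v)=\frac{2}{3}$. Perfectness is the dual statement: every column must attain $\frac{2}{3}$, so that each of the $32$ vertices lies on some facet. I would therefore substitute all $32$ vertices into the six polynomials $-\lambda_j$ and confirm both that each row-maximum is $\frac{2}{3}$ and that no column is missed. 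Because $32$ is not divisible by $6$, the incidences will distribute unevenly among the facets (some carrying more cube vertices than others), in contrast to the clean two-per-facet pattern of $S_2$, and confirming that no vertex is left off the boundary is the bookkeeping heart of the argument.

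The main obstacle is not this finite, if lengthy, verification but the discovery of $S$ itself: one needs vertex coordinates whose inverse matrix has entries making all six facet-maxima coincide at $\frac{2}{3}$ while simultaneously forcing all $32$ cube vertices onto the boundary. I would search among simplices centered at $\left(\frac{1}{2},\ldots,\frac{1}{2}\right)$ with half-integer coordinates, exploiting the coordinate-permutation and reflection symmetries of $Q_5$ both to shrink the search and to engineer the uniform facet behavior demanded by (\ref{nev_uhl_circum_cub_cond}).
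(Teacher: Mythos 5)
Your plan reproduces the paper's strategy exactly --- an explicit simplex $S\subset Q_5$, the matrix $\mathbf{A}^{-1}$, the Lagrange polynomials, the computation $\max_{k}\max_{x\in\ver(Q_5)}(-\lambda_k(x))=\frac{2}{3}$ giving $\xi(S)=5$ via (\ref{nev_uhl_xi_s_cub_formula}), circumscription via (\ref{nev_uhl_circum_cub_cond}), and perfectness by checking that every cube vertex attains the maximum for some $k$ (in the paper the $32$ vertices split $4+4+8+8+4+4$ among the six facets, the uneven distribution you predicted). But the proposal never produces the simplex, and for this theorem the explicit example \emph{is} the proof: everything else is routine application of formulas already established in the introduction. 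A plan that says ``once a candidate $S$ is fixed, I would verify\dots'' together with a description of how one might search leaves the existence claim entirely open, so as written there is a genuine gap --- the single idea that the theorem actually requires is missing.

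Moreover, your proposed search heuristic would likely fail to close that gap. You suggest searching among simplices centered at $\left(\frac{1}{2},\ldots,\frac{1}{2}\right)$ with \emph{half-integer} coordinates, but the paper's perfect simplex has vertices such as $\left(\frac{1}{2},1,\frac{1}{3},1,1\right)$ and $\left(\frac{1}{2},\frac{1}{2},0,\frac{1}{3},0\right)$: coordinates equal to $\frac{1}{3}$ are essential, and in the whole known family $R(s,t)$ of $5$-dimensional perfect simplices (Section~\ref{nevuhl_perf_fam5}) the third and fourth coordinates involve thirds for every admissible $(s,t)\in\left[\frac{4}{9},\frac{5}{9}\right]^2$. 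So restricting to the lattice $\left\{0,\frac{1}{2},1\right\}^5$ may well be searching an empty set; the centroid constraint from Theorem~\ref{nev_uhl_theor_center} and the cube symmetries you invoke are sound guides, but they do not by themselves surface the $\frac{1}{3}$'s that make the facet maxima equalize at $\frac{2}{3}$. The correct heuristics are necessary but nowhere near sufficient here; the discovery step you correctly identify as ``the genuine difficulty'' is precisely what remains undone.
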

\begin{proof}
Consider the simplex $S\subset {\mathbb R}^5$ with vertices
$\left(\frac{1}{2},1,\frac{1}{3},1,1\right)$,
$\left(\frac{1}{2},0,\frac{1}{3},1,1\right)$, 
$\left(\frac{1}{2},\frac{1}{2},\frac{1}{3},0,1\right)$,
$\left(\frac{1}{2},\frac{1}{2},0,\frac{1}{3},0\right)$,
$\left(0,\frac{1}{2},1,\frac{1}{3},0\right)$,
$\left(1,\frac{1}{2},1,\frac{1}{3},0\right)$.
Obviously, $S\subset Q_5$. 
Matrices ${\bf A}$ and  ${\bf A}^{-1}$ for the simplex  $S$ have the form
$$
{\bf A}=
\left(
\arraycolsep=5pt\def\arraystretch{1.2}
\begin{array}{cccccc}
 \frac{1}{2} & 1 & \frac{1}{3} & 1 & 1 & 1 \\
 \frac{1}{2} & 0 & \frac{1}{3} & 1 & 1 & 1 \\
 \frac{1}{2} & \frac{1}{2} & \frac{1}{3} & 0 & 1 & 1 \\
 \frac{1}{2} & \frac{1}{2} & 0 & \frac{1}{3} & 0 & 1 \\
 0 & \frac{1}{2} & 1 & \frac{1}{3} & 0 & 1 \\
 1 & \frac{1}{2} & 1 & \frac{1}{3} & 0 & 1 \\
\end{array}
\right), \quad
{\bf A}^{-1} =
\left(
\arraycolsep=2.2pt\def\arraystretch{1.2}
\begin{array}{cccccc}
 0 & 0 & 0 & 0 & -1 & 1 \\
 1 & -1 & 0 & 0 & 0 & 0 \\
 0 & 0 & 0 & -1 & \frac{1}{2} & \frac{1}{2} \\
 \frac{1}{2} & \frac{1}{2} & -1 & 0 & 0 & 0 \\
 \frac{1}{6} & \frac{1}{6} & \frac{2}{3} & -\frac{2}{3} & -\frac{1}{6} & -\frac{1}{6} \\
 -\frac{2}{3} & \frac{1}{3} & \frac{1}{3} & 1 & \frac{1}{2} & -\frac{1}{2} \\
\end{array}
\right).
$$
In this case $\det({\bf A})=1$ and $\vo(S)=\frac{|\det({\bf A})|}{5!}=\frac{1}{120}.$
Basic Lagrange polynomials for $S$ are
\begin{align*}
\lambda_1(x)&=  x_2+\frac{1}{2}\,x_4+\frac{1}{6}\,x_5-\frac{2}{3}, &
\lambda_2(x)&= -x_2+\frac{1}{2}\,x_4+\frac{1}{6}\,x_5+\frac{1}{3}, \\
\lambda_3(x) &= -x_4+\frac{2 }{3}\,x_5+\frac{1}{3} ,  &
 \lambda_4(x) &= -x_3-\frac{2 }{3}\,x_5+1, \\
  \lambda_5(x) &=-x_1+\frac{1}{2}\,x_3-\frac{1}{6}\,x_5+\frac{1}{2}, &
  \lambda_6(x) &= x_1+\frac{1}{2}\,x_3-\frac{1}{6}\,x_5-\frac{1}{2} .
\end{align*}
Let is find $\xi(S)$ using (\ref{nev_uhl_xi_s_cub_formula}). Computations show that
\begin{equation} \label{nev_uhl_eq23}
\max_{1\leq k\leq 6}
\max_{x\in \ver(Q_5)}(-\lambda_k(x))=\frac{2}{3} . 
\end{equation} 
It follows from (\ref{nev_uhl_xi_s_cub_formula}) that
$$\xi(S)=6\max_{1\leq k\leq 6}
\max_{x\in \ver(Q_5)}(-\lambda_k(x))+1=6\cdot \frac{2}{3} +1=5.
$$
Since $\xi(S)=5=\operatorname{dim}{\mathbb R}^5$, inequalities
 (\ref{nev_uhl_xi_geq_alpha_geq_n}) imply $\alpha(S)=\xi(S)=5$. 
It means that the simplex $5S$ is circumscribed around $Q_5$.
We get from (\ref{nev_uhl_alpha_d_i_formula}) that each axial diameter 
of $S$ is equal to $1$. 

Maximum in (\ref{nev_uhl_eq23}) is achieved as follows:
for $k=1$ --- on vertices  
$(0,0,0,0,0)$, $(0,0,1,0,0)$, $(1,0,0,0,0)$, $(1,0,1,0,0)$;
for $k=2$ --- on vertices
$(0,1,0,0,0)$, $(0,1,1,0,0)$, $(1,1,0,0,0)$, $(1,1,1,0,0)$;
for $k=3$ --- on vertices
$(0,0,0,1,0)$, $(0,0,1,1,0)$, $(0,1,0,1,0)$, $(0,1,1,1,0)$, $(1,0,0,1,0)$, $(1,0,1,1,0)$, $(1,1,0,1,0)$, $(1,1,1,1,0)$
for $k=4$ --- on vertices
$(0,0,1,0,1)$, $(0,0,1,1,1)$, $(0,1,1,0,1)$, $(0,1,1,1,1)$, $(1,0,1,0,1)$, $(1,0,1,1,1)$, $(1,1,1,0,1)$, $(1,1,1,1,1)$;
for $k=5$ ---  on vertices
$(1,0,0,0,1)$, $(1,0,0,1,1)$, $(1,1,0,0,1)$, $(1,1,0,1,1)$;
for $k=6$ --- on vertices
$(0,0,0,0,1)$, $(0,0,0,1,1)$, $(0,1,0,0,1)$, $(0,1,0,1,1)$.
Therefore, the condition (\ref{nev_uhl_circum_cub_cond}) is satisfied. 
It has the form
$$
\max\limits_{x\in \ver(Q_5)} \left(-\lambda_1(x)\right)=
\ldots=
\max\limits_{x\in \ver(Q_5)} \left(-\lambda_{6}(x)\right)=\frac{2}{3}.
$$
This condition provides an alternative proof of the fact that  simplex $5S$ is circumscribed around the cube $Q_5$.

The list of vertices delivering the maximum in (\ref{nev_uhl_eq23}) includes all vertices of the cube.
It means that $(n-1)$-dimensional faces of the simplex $5S$ (which form its boundary) contain 
all the vertices of the cube.

The theorem is proved.
\end{proof}

\begin{corollary*}\label{nev_uhl_corol_xi5is5}
$\xi_5=5$.
\end{corollary*}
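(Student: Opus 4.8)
The plan is straightforward: the Corollary asserts $\xi_5=5$, and this follows immediately from Theorem~\ref{nev-uhl-ksi5is5} together with the general lower bound already available in the excerpt. First I would invoke the universal inequality $\xi_n\geq n$, valid for every $n$-dimensional simplex $S\subset Q_n$, which is recorded in~(\ref{nev_uhl_xi_geq_alpha_geq_n}) as the chain $\xi(S)\geq\alpha(S)\geq n$; taking the infimum over all such simplices gives $\xi_5\geq 5$. This half requires no new work at all.

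For the reverse inequality $\xi_5\leq 5$, the key observation is that $\xi_5$ is defined as a minimum of $\xi(S)$ over all nondegenerate simplices $S\subset Q_5$, so it suffices to exhibit a single simplex achieving the value $5$. But Theorem~\ref{nev-uhl-ksi5is5} does exactly this: it produces an explicit simplex $S\subset Q_5$ for which $\xi(S)=5$ (indeed the proof of that theorem computes $\xi(S)=6\cdot\frac{2}{3}+1=5$). Hence $\xi_5\leq\xi(S)=5$. Combining the two bounds yields $\xi_5=5$.

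I would therefore write the argument in essentially one sentence: the simplex constructed in Theorem~\ref{nev-uhl-ksi5is5} satisfies $\xi(S)=5$, so $\xi_5\leq 5$ by definition of $\xi_5$, while $\xi_5\geq 5$ by~(\ref{nev_uhl_xi_geq_alpha_geq_n}); the two inequalities together give the claim. There is genuinely no obstacle here, since all the substantive work—the explicit construction and the verification that $\xi(S)=5$—has already been carried out in the proof of the preceding theorem. The only point deserving a moment's care is to note that the theorem guarantees a \emph{nondegenerate} simplex (the computation gives $\vo(S)=\tfrac{1}{120}\neq 0$), so that $S$ is a legitimate competitor in the defining minimum for $\xi_5$.
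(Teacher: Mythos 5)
Your proposal is correct and follows exactly the paper's own argument: the simplex of Theorem~\ref{nev-uhl-ksi5is5} gives $\xi_5\leq\xi(S)=5$, and the bound~(\ref{nev_uhl_xi_geq_alpha_geq_n}) gives $\xi_5\geq 5$. The remark on nondegeneracy (via $\vo(S)=\tfrac{1}{120}\neq 0$) is a fine extra precaution but adds nothing beyond what the paper already establishes.
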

\begin{proof} Let $S$ be the simplex from the proof of Theorem~\ref{nev-uhl-ksi5is5}. Then
$\xi_5\leq \xi(S)=5$. 
It follows from (\ref{nev_uhl_xi_geq_alpha_geq_n}) that the inverse inequality $\xi_5\geq 5$ holds. Therefore $\xi_5=5$. 
\end{proof}

It is easy to see that center of gravity of  $S$  is the point
$\left(
 \frac{1}{2},
 \frac{1}{2},
 \frac{1}{2},
 \frac{1}{2},
 \frac{1}{2}
\right)$. 
This fact also can be obtained from Theorem~\ref{nev_uhl_theor_center}.

The results of this section mean that {\it there exist perfect simplices in ${\mathbb R}^5$}.
Moreover, we have found infinite family of such simplices.  This family is described in the next section.

\section{Some Family of Perfect Simplices in ${\mathbb R}^5$}
\label{nevuhl_perf_fam5}

Consider the family of simplices $R=R(s,t)$ in ${\mathbb R}^5$ with vertices 
$\left(s,1,\frac{1}{3},1,1\right)$, 
$\left(s, 0,  \frac{1}{3}, 1, 1\right)$, $\left(s,2-3 t,\frac{1}{3},0,1\right)$,
$\left(2-3 s,t,0,\frac{1}{3},0\right)$, $\left(0, t, 1, \frac{1}{3}, 0\right)$, 
$\left(1, t, 1, \frac{1}{3}, 0\right)$ \linebreak
where $s,t\in {\mathbb R}$. 
The matrix ${\bf A}$ for the simplex $R$ has the form
$$
{\bf A}={\bf A}(s,t)=
\left(
\arraycolsep=3pt\def\arraystretch{1.3}
\begin{array}{cccccc}
 s & 1 & \frac{1}{3} & 1 & 1 & 1 \\
 s & 0 & \frac{1}{3} & 1 & 1 & 1 \\
 s & 2-3 t & \frac{1}{3} & 0 & 1 & 1 \\
 2-3 s & t & 0 & \frac{1}{3} & 0 & 1 \\
 0 & t & 1 & \frac{1}{3} & 0 & 1 \\
 1 & t & 1 & \frac{1}{3} & 0 & 1 \\
\end{array}
\right).
$$
An easy computation shows that  $\det ({\bf A})=1$.
So, for any $s$ and $t$, the simplex $R$ is nondegenerate and 
$$
\vo(R)=\frac{|\det({\bf A})|}{5!}=\frac{1}{120}.
$$

Now assume that $\frac{1}{3}\leq s,t\leq \frac{2}{3}$,
then $R\subset Q_5$.
The matrix ${\bf A}^{-1}$  has the form
$$
{\bf A}^{-1}={\bf A}^{-1}(s,t)=
\left(
\arraycolsep=5pt\def\arraystretch{1.4}
\begin{array}{cccccc}
 0 & 0 & 0 & 0 & -1 & 1 \\
 1 & -1 & 0 & 0 & 0 & 0 \\
 0 & 0 & 0 & -1 & 3 s-1 & 2-3 s \\
 2-3 t & 3 t-1 & -1 & 0 & 0 & 0 \\
 3 t-\frac{4}{3} & \frac{5}{3}-3 t & \frac{2}{3} & -\frac{2}{3} & 3 s-\frac{5}{3} & \frac{4}{3}-3 s \\
 -\frac{2}{3} & \frac{1}{3} & \frac{1}{3} & 1 & 2-3 s & 3 s-2 \\
\end{array}
\right).
$$
Let us write down the basic Lagrange polynomials of $R$:
\begin{equation}\label{nev_uhl_max_lagpol}
\arraycolsep=5pt\def\arraystretch{1.4}
\begin{array}{ll}
\lambda_1(x)&= x_2 + (2-3 t) x_4+\left(3 t-\frac{4}{3}\right) x_5-\frac{2}{3}, \\
\lambda_2(x)&= -x_2 + (3 t-1) x_4+ \left(\frac{5}{3}-3 t\right)x_5+\frac{1}{3}, \\
\lambda_3(x) &=-x_4+\frac{2 }{3} x_5+\frac{1}{3}  , \\
 \lambda_4(x) &=-x_3-\frac{2 }{3} x_5+1, \\
  \lambda_5(x) &=-x_1+(3 s-1) x_3+\left(3 s-\frac{5}{3}\right) x_5+2-3 s, \\
  \lambda_6(x) &=x_1+(2-3 s) x_3+\left(\frac{4}{3}-3 s\right) x_5+3 s-2 .
\end{array}
\end{equation}

The axial diameters of $R$ can be found by  (\ref{nev_uhl_d_i_formula}):
$$
d_1(s,t)=d_2(s,t)=1,
$$
$$
 d_3(s,t)=\frac{2}{\left| 1-3 s\right| +\left| 2-3 s\right| +1}, \quad  d_4(s,t)  = \frac{2}{\left| 1-3 t\right| +\left| 2-3 t\right| +1},  
$$
$$
d_5(s,t)=
\frac{2}{\left| \frac{4}{3}-3 s\right| +\left| \frac{5}{3}-3 s\right| +\left| \frac{4}{3}-3 t\right| +\left| \frac{5}{3}-3 t\right| +\frac{4}{3}}.
$$
Therefore,
$$
\alpha(s,t)=
\frac{1}{2} \left(\left| 1-3 s\right| +\left| 2-3 s\right| +\left| \frac{4}{3}-3 s\right| +\left| \frac{5}{3}-3 s\right| +  \left| 1-3 t\right|  + \right.
$$
$$
\left. 
+\left| 2-3 t\right| +\left| \frac{4}{3}-3 t\right| +\left| \frac{5}{3}-3 t\right| +\frac{22}{3}\right).
$$
If the conditions 
\begin{equation}\label{nev_uhl_xi_is_5_cond}
s\in\left[\frac{4}{9},\frac{5}{9}\right], \quad  t \in\left[\frac{4}{9},\frac{5}{9}\right]
\end{equation}
are satisfied, then we have $d_i(s,t)=1$ and $\alpha(s,t)=5$.

Define
$$
M(s,t):=
\max_{1\leq k\leq 6}\max_{x\in \ver(Q_5)}(-\lambda_k(x)).
$$
Calculations with the use of (\ref{nev_uhl_max_lagpol}) give
\begin{equation}\label{nev_uhl_max_st}
M(s,t)=\max \left\{\frac{2}{3}, \,2-3 s, \,3 s-1, \,2-3 t, \,3 t-1\right\}.
\end{equation}
Now we can find  $\xi(R)$. We have
$$
\arraycolsep=5pt\def\arraystretch{1.4}
\begin{array}{rl}
M(s,t)=\frac{2}{3}, & s\in\left[\frac{4}{9},\frac{5}{9}\right], \  t \in\left[\frac{4}{9},\frac{5}{9}\right];\\
M(s,t)>\frac{2}{3}, & s\notin\left[\frac{4}{9},\frac{5}{9}\right], \ t \notin\left[\frac{4}{9},\frac{5}{9}\right]. \\
\end{array}
$$
Applying (\ref{nev_uhl_xi_s_cub_formula}), we obtain
\begin{equation}\label{nev_uhl_xi_of_st}
\arraycolsep=5pt\def\arraystretch{1.4}
\begin{array}{rl}
\xi(R)=6M(s,t)+1=5, & s\in\left[\frac{4}{9},\frac{5}{9}\right], \  t \in\left[\frac{4}{9},\frac{5}{9}\right];\\
\xi(R)=6M(s,t)+1>5, & s\notin\left[\frac{4}{9},\frac{5}{9}\right], \ t \notin\left[\frac{4}{9},\frac{5}{9}\right]. \\
\end{array}
\end{equation}

The graph of the function $\xi=\xi(R(s,t))$ for $s\in\left[\frac{1}{3},\frac{1}{2}\right], \  t \in\left[\frac{1}{3},\frac{2}{3}\right]$ is shown in Fig.~\ref{fig:nev_ukl_xi}.
\begin{figure}
\includegraphics[width=1.0\textwidth]{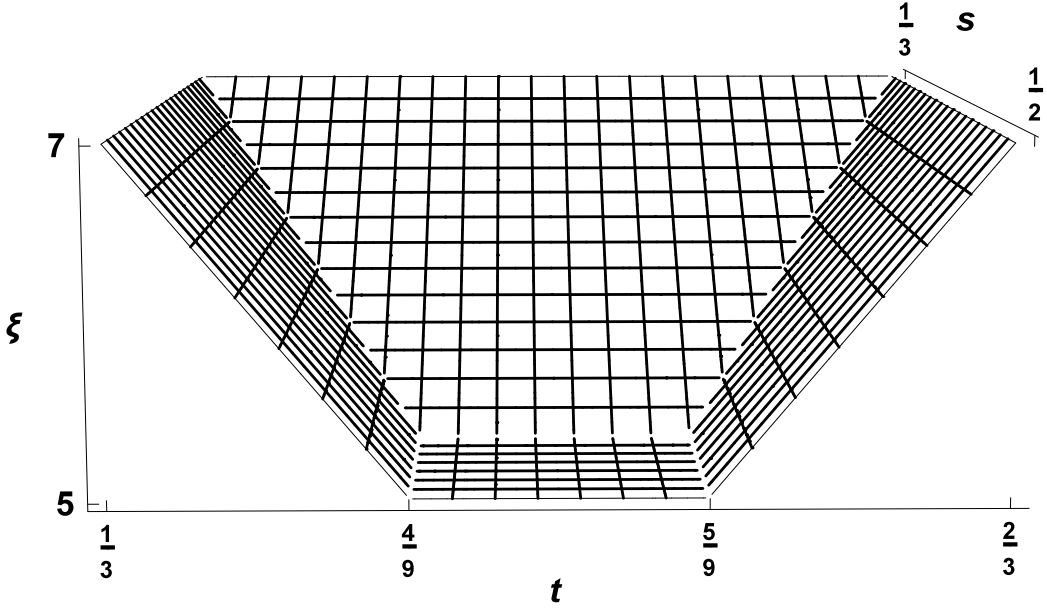}
\caption{Graph of the function  $\xi=\xi(R(s,t))$ for $s\in\left[\frac{1}{3},\frac{1}{2}\right], \  t \in\left[\frac{1}{3},\frac{2}{3}\right]$}
\label{fig:nev_ukl_xi}
\end{figure}
It follows from (\ref{nev_uhl_xi_of_st}) and Collorary~\ref{nev_uhl_corol_xi5is5} that,
if the conditions (\ref{nev_uhl_xi_is_5_cond}) are satisfied, then 
 $\xi(R(s,t))=\xi_5=5$. Thus, $R(s,t) \subset Q_5 \subset 5 R(s,t)$.

\begin{table}
\caption{Main extremal vertices}
\label{tab:nev_uhl_famvert_base}       
\bgroup
\def\arraystretch{2.0}%
\begin{tabular}{|c|c|}
\hline 
$k$ & 
  \def\arraystretch{1.5}
   \begin{tabular}{c}
    Vertices of $Q_5$ such that $(-\lambda_k(x))=\frac{2}{3}$ \\ 
    for $\frac{4}{9}\leq s \leq \frac{5}{9}$, $\frac{4}{9}\leq t \leq \frac{5}{9}$
       \end{tabular}
   \\
\hline 
$1$   &  
  \def\arraystretch{1.3}
   \begin{tabular}{c}
    $(0,0,0,0,0)$,   $(0,0,1,0,0)$,  $(1,0,0,0,0)$,    $(1,0,1,0,0)$ \\ 
       \end{tabular}
         \\
  \hline 
 $2$   &  
  \def\arraystretch{1.3}
   \begin{tabular}{c}
    $(0,1,0,0,0)$, $(0,1,1,0,0)$, $(1,1,0,0,0)$, $(1,1,1,0,0)$ \\ 
       \end{tabular}
        \\
  \hline
 $3$    &
  \def\arraystretch{1.3}
   \begin{tabular}{c}
    $(0,0,0,1,0)$, $(0,0,1,1,0)$, $(0,1,0,1,0)$, $(0,1,1,1,0)$,\\ $(1,0,0,1,0)$, $(1,0,1,1,0)$, $(1,1,0,1,0)$, $(1,1,1,1,0)$ \\ 
       \end{tabular}
             \\
  \hline
$4$   & 
  \def\arraystretch{1.3}
   \begin{tabular}{c}
    $(0,0,1,0,1)$, $(0,0,1,1,1)$, $(0,1,1,0,1)$, $(0,1,1,1,1)$,\\ $(1,0,1,0,1)$, $(1,0,1,1,1)$, $(1,1,1,0,1)$, $(1,1,1,1,1)$ \\ 
       \end{tabular}
          \\
  \hline
 $5$   &  
  \def\arraystretch{1.3}
   \begin{tabular}{c}
    $(1,0,0,0,1)$, $(1,0,0,1,1)$, $(1,1,0,0,1)$, $(1,1,0,1,1)$ \\ 
       \end{tabular}
      \\
  \hline
 $6$   &  
  \def\arraystretch{1.3}
   \begin{tabular}{c}
    $(0,0,0,0,1)$, $(0,0,0,1,1)$,  $(0,1,0,0,1)$, $(0,1,0,1,1)$ \\ 
       \end{tabular}
         \\
  \hline
 \end{tabular}
\egroup
\end{table}


\begin{table}
\caption{Additional extremal vertices}
\label{tab:nev_uhl_famvert_special}       
\bgroup
\def\arraystretch{2.0}%
\begin{tabular}{|c|c|c|}
\hline 
$k$ & Values of $s$ and $t$ &   
\def\arraystretch{1.5}
   \begin{tabular}{c}
    Vertices of $Q_5$ such that $(-\lambda_k(x))=\frac{2}{3}$ \\ 
    only for given values of $s$ and $t$
       \end{tabular} \\
\hline 
 $1$ & $\frac{4}{9}\leq s \leq \frac{5}{9}$, $t=\frac{4}{9}$   &  
            \def\arraystretch{1.3}
           \begin{tabular}{c}
          $(0,0,0,0,1)$, $(0,0,1,0,1)$, \\ $(1,0,0,0,1)$, $(1,0,1,0,1)$\\
          \end{tabular}
          \\
  \hline 
   $2$ & $\frac{4}{9}\leq s \leq \frac{5}{9}$, $t=\frac{5}{9}$  &  
           \def\arraystretch{1.3}
           \begin{tabular}{c}
          $(0,1,0,0,1)$, $(0,1,1,0,1)$, \\ $(1,1,0,0,1)$, $(1,1,1,0,1)$\\
          \end{tabular}
          \\
  \hline
 $5$ &  $s=\frac{5}{9}$, $\frac{4}{9}\leq t \leq \frac{5}{9}$   &  
           \def\arraystretch{1.3}
           \begin{tabular}{c}
          $(1,0,0,0,0)$, $(1,0,0,1,0)$, \\ $(1,1,0,0,0)$, $(1,1,0,1,0)$\\
          \end{tabular}
          \\
  \hline
 $6$ & $s=\frac{4}{9}$, $\frac{4}{9}\leq t \leq \frac{5}{9}$   &  
            \def\arraystretch{1.3}
           \begin{tabular}{c}
          $(0,0,0,0,0)$, $(0,0,0,1,0)$, \\ $(0,1,0,0,0)$, $(0,1,0,1,0)$\\
          \end{tabular}
          \\
  \hline
 \end{tabular}
\egroup
\end{table}

In the following we assume that  (\ref{nev_uhl_xi_is_5_cond}) is true.
Since $\xi(R)=5=\operatorname{dim}{\mathbb R}^5$, we have $\alpha(R)=\xi(R)$
(see section~\ref{nev_uhl_p_intro}). Therefore, the simplex $5R$ is circumscribed around $Q_5$.
The equality $\alpha(R)=5$ implies that axial diameters of  $R$ are equal to $1$. 
If  (\ref{nev_uhl_xi_is_5_cond}) holds true, then (\ref{nev_uhl_max_st}) takes the form
\begin{equation}\label{nev_uhl_max_23}
\max_{1\leq k\leq 6} \ \max_{x\in \ver(Q_5)}(-\lambda_k(x))=\frac{2}{3}.
\end{equation}
The set of vertices of $Q_5$ delivering maximum in (\ref{nev_uhl_max_23}),
for given $k$, depends on $s$ and $t$. 
Let us study the behavior of the coefficients of basic Lagrange polynomials 
(\ref{nev_uhl_max_lagpol}).
If $s$ and $t$ satisfy (\ref{nev_uhl_xi_is_5_cond}), then 
$2-3t>0$, $3t-1>0$, $3s-1>0$, $2-3s>0$. 
For the rest of coefficients depending on  $s$ and $t$, we have inequalities
$3 t-\frac{4}{3}\geq 0$, $\frac{5}{3}-3 t\geq 0$, $3 s-\frac{5}{3}\leq 0$, $\frac{4}{3}-3s\leq 0$. 
These expressions can be equal to $0$ only on the boundary of the area (\ref{nev_uhl_xi_is_5_cond}).
If   the multiplier in front of $x_i$ is equal to $0$, then
$x_i$ does not influence the value of $\lambda_k(x)$. 
Since, for $x\in \ver ( Q_5)$, the variable  $x_i$ takes 
values $0$ or $1$, the number of extremal vertices for  $\lambda_k(x)$ in this situation doubles.

The lists of vertices delivering maximum in (\ref{nev_uhl_max_23}) are given in
Table~\ref{tab:nev_uhl_famvert_base} and Table~\ref{tab:nev_uhl_famvert_special}.
Denote $\Pi=\left\{(s,t): \frac{4}{9}<s< \frac{5}{9}, \ \frac{4}{9}< t< \frac{5}{9}   \right\}$.
Using Theorem~\ref{nev_uhl_lem_vertices}, we obtain  the following result. 
For any $(s,t)\in\Pi$, 
the set $\ver ( Q_5)$ is divided into 6 non-intersected classes of extremal vertices.
Number of all extremal vertices is equal to 32,  i.\,e., the number of vertices of $Q_5$.
If  $(s,t)\in\Pi$, then the vertices of the same class are the inner points of the same 4-dimensional
facet of the simplex $5R(s,t)$. 
The partition of $\ver(Q_5)$ into classes is represented in  Table~\ref{tab:nev_uhl_famvert_base}.

Additionally, if $k=1,2,5,6$, then, for some values of  $(s,t)$
(in particular, when  $(s,t)\in\partial\Pi$),
some other vertices also will be extremal.
All such cases are listed in Table~\ref{tab:nev_uhl_famvert_special}.
In each of these cases, corresponding vertices belong to two different 4-dimensional facets 
of $5R(s,t)$, i.\,e.,  belong to the facets of smaller dimension.

If  one of  $s$ and  $t$ is the endpoint of 
 $\left[ \frac{4}{9},  \frac{5}{9}\right]$, then the number of extremal vertices
  (considering repetitions) is equal to 36. 
 If both $s$ and  $t$ are the endpoints of the segment, then the number of extremal vertices
  (considering repetitions) is equal to 40.

It is easy to see that the center of gravity of $R(s,t)$ coincides with the center of gravity of $Q_5$.
This fact could be also derived from Theorem~\ref{nev_uhl_theor_center}.
\section{Nonregular Extremal Simplices in ${\mathbb R}^7$}
\label{nev_uhl_p_fam7}

Since $8$ is an Hadamard number, it follows from  Theorem~\ref{nev-uhl-adamartheorem} that there exist the regular simplex  $S\subset Q_7$ such that
$\xi(S)=\xi_7=7$. 
In this section we show that the property $\xi(S)=\xi_7$ holds also for some nonregular simplices.
In this sense the case $n=7$ is  analogous to the case $n=3$ (see section~\ref{nev_uhl_p_n13}).

Let  $T=T(t)$ be the simplex with vertices
 $(1, 0, 0, 0, 0, 0, 1)$, 
 $(1, 0, 1, t, 1, 1, 0)$, \linebreak
  $(0, 1, 1, 1 - t, 0, 1, 1)$,  
 $(0, 0, 0, t, 1, 1, 0)$,  
 $(0, 1, 1, 1 - t, 0, 0, 0)$,   
 $(1, 1, 0, t, 1, 1, 0)$, \linebreak
 $(0, 1, 1, 1 - t, 1, 0, 1)$, 
  $(1, 0, 0, 1, 0, 0, 1)$.
  If $0\leq t \leq 1$,  then   $T(t)\subset Q_7$.
The matrix ${\bf A}(t)$  for the simplex $T$ has the form
$$
{\bf A} (t)=
\left(
\arraycolsep=3.3pt\def\arraystretch{1.0}
\begin{array}{cccccccc}
 1 & 0 & 0 & 0 & 0 & 0 & 1 & 1 \\
 1 & 0 & 1 & t & 1 & 1 & 0 & 1 \\
 0 & 1 & 1 & 1-t & 0 & 1 & 1 & 1 \\
 0 & 0 & 0 & t & 1 & 1 & 0 & 1 \\
 0 & 1 & 1 & 1-t & 0 & 0 & 0 & 1 \\
 1 & 1 & 0 & t & 1 & 1 & 0 & 1 \\
 0 & 1 & 1 & 1-t & 1 & 0 & 1 & 1 \\
 1 & 0 & 0 & 1 & 0 & 0 & 1 & 1 \\
\end{array}
\right).
$$
For any $t$, holds  $\det \left({\bf A}(t) \right)=8$.
Therefore, $T(t)$ is nondegenerate and
$$
\vo (T(t)) = \frac{|\det ({\bf A}(t))|}{7!}  = \frac{1}{630} = 0.001587\dots
$$

\begin{theorem}\label{nev-uhl-ksi7is7family}
If  $t\in \left[\frac{1}{4},\frac{3}{4}\right]$, then  simplex $7T(t)$ is circumscribed around $Q_7$.
\end{theorem}
\begin{proof}
The matrix  $[{\bf A}(t)]^{-1}$  has the form
$$
[{\bf A}(t)]^{-1} = \frac{1}{8}
\left(
\arraycolsep=3.5pt\def\arraystretch{1.0}
\begin{array}{cccccccc}
 4 t-1 & 3 & -1 & -5 & -1 & 3 & -1 & 3-4 t \\
 1-4 t & -3 & 1 & -3 & 1 & 5 & 1 & 4 t-3 \\
 1-4 t & 5 & 1 & -3 & 1 & -3 & 1 & 4 t-3 \\
 -8 & 0 & 0 & 0 & 0 & 0 & 0 & 8 \\
 4 t-3 & 1 & -3 & 1 & -3 & 1 & 5 & 1-4 t \\
 4 t-3 & 1 & 5 & 1 & -3 & 1 & -3 & 1-4 t \\
 3-4 t & -1 & 3 & -1 & -5 & -1 & 3 & 4 t-1 \\
 6 & -2 & -2 & 6 & 6 & -2 & -2 & -2 \\
\end{array}
\right).
$$
The columns of this matrix contain the coefficients of basic Lagrange polynomials $\mu_k(x)$, $1\leq k \leq 8$.
Using  (\ref{nev_uhl_d_i_formula}) and (\ref{nev_uhl_alpha_d_i_formula}),  we obtain 
$$
d_i(T)=\frac{16}{\left| 1-4 t\right| +\left| 3-4 t\right| +14}, \quad \ i\neq 4; \qquad d_4(T)=1; 
$$
$$
\alpha(T) = 
\frac{1}{8} \bigl(3 \left| 1-4 t\right| +3 \left| 3-4 t\right| +50 \bigr) = 
 \left\{ 
 \arraycolsep=3pt\def\arraystretch{1.2} 
\begin{array}{cc}
 7, & \frac{1}{4}\leq t\leq \frac{3}{4}, \\
 \frac{31}{4}-3 t, & t<\frac{1}{4}, \\
 3 t+\frac{19}{4}, & t>\frac{3}{4}. \\
\end{array}
\right. 
$$
For $t\in \left[\frac{1}{4},\frac{3}{4}\right]$, holds  $d_i(T)=1$. Therefore,
$\alpha(T) = \sum_{i=1}^7 \frac{1}{d_i(T)} = 7$.
\begin{figure}
\includegraphics[width=1\textwidth]{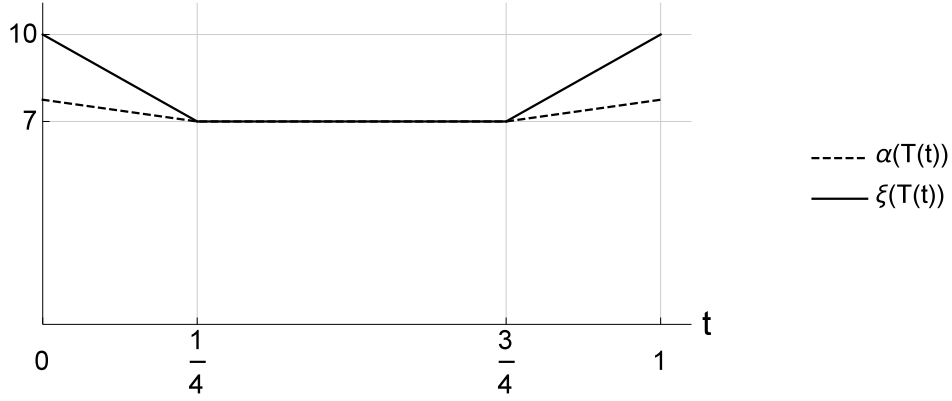}
\caption{The graphs of the functions $\alpha(T(t))$ and  $\xi(T(t))$ }
\label{fig:nev_ukl_ksi7_is_7_family}
\end{figure}

Now, using (\ref{nev_uhl_xi_s_cub_formula}), we compute  $\xi(T)$ :
$$
\xi(T)= 8 \max_{1\leq k\leq 8} \ \max_{x\in \ver Q_7} (-\mu_k(x)) +1  
=  
 \left\{  
\begin{array}{cc}
 7, & \frac{1}{4}\leq t\leq \frac{3}{4}, \\
 10-12 t, & t<\frac{1}{4}, \\
 12 t-2, & t>\frac{3}{4}. \\
\end{array}
\right.
$$

Consequently, for $\frac{1}{4} \leq t \leq \frac{3}{4}$, we have $\alpha(T)=\xi(T)=7$. 
It means that simplex $7T$ is circumscribed around  $Q_7$. 
The theorem is proved.
\end{proof}

The graphs of the functions $\alpha(T(t))$ and  $\xi(T(t))$ are shown in  Fig.~\ref{fig:nev_ukl_ksi7_is_7_family}.


\section{The Exact Value of $\xi_9$}
\label{nev_uhl_p_fam9}

In this section we  show that $\xi_9=9$.
Moreover, we describe some family of simplices $S\subset Q_9$ such that $\xi(S)=9$.

Let  $S=S(t)$ be the simplex with vertices
$(1, 0, 0, 0, 0, 0, 0, 0, 1)$, \linebreak
$(1, 1, 0, 1, t, 1, 1, 0, 0)$,
$(1, 0, 1, 1, 1 - t, 0, 1, 1, 0)$,
$(0, 1, 1, 1, t, 0, 0, 1, 1)$, \linebreak
$(0, 1, 1, 0, 1 - t, 1, 0, 0, 0)$,
$(0, 0, 0, 1, t, 0, 1, 1, 0)$,
$(1, 1, 0, 0, 1 - t, 1, 1, 1, 0)$, \linebreak
$(0, 1, 1, 0, t, 1, 1, 0, 1)$,
$(0, 0, 1, 1, 1 - t, 1, 0, 1, 1)$,
$(1, 0, 0, 0, 1, 0, 0, 0, 1)$.  \linebreak
If  $0 \leq t \leq 1$, then $S(t)\subset Q_9$.
Omitting the details, we present characteristics of $S$:
$$
\det({\bf A})=25, \qquad \vo(S)=\frac{|\det({\bf A})|}{9!} = \frac{5}{72576} = 0.00006889\dots
$$
$$
d_i(S) = \frac{50}{\left| 10-25 t\right| +\left| 15-25 t\right| +45}, \  i\neq 5; \quad d_5 =1;
$$
$$
\alpha(S)=
 \left\{
 \arraycolsep=3pt\def\arraystretch{1.2} 
\begin{array}{cc}
 9, & \frac{2}{5}\leq t\leq \frac{3}{5}, \\
 \frac{61}{5}-8 t, &  t<\frac{2}{5}, \\
8 t+\frac{21}{5}, & t>\frac{3}{5}, \\
\end{array}
\right. \qquad
\xi(S) = 
 \left\{ 
 \arraycolsep=3pt\def\arraystretch{1.2}
\begin{array}{cc}
 9, & \frac{2}{5}\leq t\leq \frac{3}{5}, \\
 25-40 t, &  t<\frac{2}{5}, \\
 40 t - 15, & t>\frac{3}{5}. \\
\end{array}
\right.
$$
If $\frac{2}{5}\leq t\leq \frac{3}{5}$, then $\alpha(S)=\xi(S)=9$. 
Since  $\xi_9\geq 9$ (see (\ref{nev_uhl_xi_geq_alpha_geq_n})), we get $\xi_9=9$.
It follows from the equality $\alpha(S)=\xi(S)$ that  simplex
$9S$ is circumscribed around  $Q_9$.
Additional analysis shows that simplex $S(t)$ is not perfect. 

\section{Concluding Remarks}
\label{nev_uhl_p_concluding}

The values $\alpha(S)$, $\xi(S)$, $d_i(S)$, $\xi_n$ are connected with some geometric estimates in polynomial interpolation. Detailed description of this connection can
be found in the book \cite{nu_bib_3}.
These questions are beyond the scope of this paper.

Our results mean that, for all odd  $n$ from the interval $1\leq n \leq 9$, holds
 $\xi_n=n$.
 This equality holds also for the infinite set of numbers $n$ such that $n+1$ is an Hadamard number.
We suppose that $\xi_n=n$ for all odd $n$. 
However, by the time of submitting the paper, we have nor proof, nor refutation of this conjecture.

As it was mentioned above, perfect simplices in  ${\mathbb R}^n$ exist at least for $n=1,3,5$. 
It is unknown whether there exist perfect simplices for $n>5$.
It is also unclear whether there exist perfect simplices 
in  ${\mathbb R}^n$ for any even $n$. 
Note that for $n=2$ perfect simplices do not exist. 
This result follows from the  full description of simplices $S\subset Q_2$ such that
$\xi(S)=\xi_2$, see \cite{nu_bib_3}. 

The examples of extremal simplices for $n=5,7,9$ were found partially with the use of computer. 
The families of simplices were constructed by generalization of individual examples. 
In particular, we utilized the property of simplices from Theorem~\ref{nev_uhl_theor_center}.
The figures were created with application of the system Wolfram Mathematica [9].
Despite the fact that we used numerical computations, all the results of this paper are exact
and the proofs are complete.


\end{document}